\newcolumntype{C}{>{$}c<{$}}
\newcolumntype{L}{>{$}l<{$}}
\title[Miyaoka type inequality]
{Miyaoka type inequality for terminal threefolds with nef anti-canonical divisors}
\date{\today, version 0.02}
\subjclass[2020]{Primary 14J30; Secondary 14J10, 14J28, 14M22}
\keywords{terminal threefolds, Miyaoka type inequality, boundedness}
\author{Masataka Iwai}
\address{Department of Mathematics, Graduate School of Science, Osaka University, 
Osaka 560-0043, Japan}
\email{{masataka@math.sci.osaka-u.ac.jp}, {masataka.math@gmail.com}}
\author{Chen Jiang}
\address{Fudan University, Shanghai Center for Mathematical Sciences \& School of Mathematical Sciences, Shanghai, 200438, China}
\email{chenjiang@fudan.edu.cn}
\author{Haidong Liu}
\address{Sun Yat-sen University, Department of mathematics, Guangzhou, 510275, China}
\email{liuhd35@mail.sysu.edu.cn, jiuguiaqi@gmail.com}
\DeclareMathOperator{\rank}{rank}
\DeclareMathOperator{\Sing}{Sing}
\DeclareMathOperator{\Aut}{Aut}
\newcommand\lcm{{\text{l.c.m.}}}
\newcommand{\xr}[1]{\textcolor{red}{#1}}
\newtheorem{thm}{Theorem}[section]
\newtheorem{lem}[thm]{Lemma}
\newtheorem{prop}[thm]{Proposition}
\newtheorem{cor}[thm]{Corollary}
\newtheorem{ques}[thm]{Question}
\theoremstyle{definition}
\newtheorem{ex}[thm]{Example}
\newtheorem{defn}[thm]{Definition}
\newtheorem{rem}[thm]{Remark}
\newtheorem*{ack}{Acknowledgments}
\newtheorem{case}{Case}
\newtheorem{claim}{Claim}
\begin{document}

\begin{abstract}
In this paper, we study the Miyaoka type inequality on Chern classes of terminal projective $3$-folds with nef anti-canonical divisors. Let $X$ be a terminal projective $3$-fold such that $-K_X$ is nef. We show that if $c_1(X)\cdot c_2(X)\neq 0$, then $c_1(X)\cdot c_2(X)\geq \frac{1}{252}$; if further $X$ is not rationally connected, then $c_1(X)\cdot c_2(X)\geq \frac{4}{5}$ and this inequality is sharp. In order to prove this, we give a partial classification of such varieties along with many examples. We also study the nonvanishing of $c_1(X)^{\dim X-2}\cdot c_2(X)$ for terminal weak Fano varieties and prove a Miyaoka--Kawamata type inequality.
\end{abstract}

\maketitle 
\tableofcontents

\section{Introduction}\label{sec1}

In \cite{liu}*{Question 4.8}, the third author proposed the following question, which 
is a mirror version of Koll\'{a}r's expectation \cite{kollar}*{Remark 3.6} for minimal varieties.

\begin{ques}\label{ques.main}
Let $X$ be a terminal projective variety of dimension $n$ such that $-K_X$ is nef.
\begin{enumerate}
\item If $c_1(X)^{n-2} \cdot c_2(X) =0$, is there a classification of $X$?
\item If $c_1(X)^{n-2} \cdot c_2(X) >0$, is there a constant $\epsilon>0$
depending only on $n$ such that $c_1(X)^{n-2} \cdot c_2(X)\geq \epsilon $?
\end{enumerate} 
\end{ques}

This expectation heavily depends on Ou's result \cite{ou}*{Corollary 1.5} on the pseudo-effectivity of the second Chern classes, which implies that for a terminal projective variety $X$ of dimension $n$ such that $-K_X$ is nef,
\begin{align}
c_1(X)^{n-2}\cdot c_2(X)\geq 0.
\end{align} 
It can be viewed as a mirror version of Miyaoka's inequality \cite{miyaoka}*{Corollary 6.4} 
on the second Chern classes.

When $\dim X=2$, that is, when $X$ is a smooth projective surface such that $-K_X$ is nef, the answer to Question~\ref{ques.main} is clear and well-known. 
If $c_2(X)=0$, then it is well-known that $X$ admits a finite \'etale cover by an abelian surface or a 
$\mathbb P^1$-bundle over an elliptic curve (see \cite{ou}*{Theorem 1.8} and the references therein).
If $c_2(X)\neq 0$, then the Noether's formula $c_2(X)=12\chi(\mathcal{O}_X)-c_1(X)^2$ implies that $\chi(\mathcal{O}_X)\geq 1$ and hence $c_2(X)\geq 3$ as $c_1(X)^2\leq 9$; furthermore, if $X$ is not rational (or $-K_X$ is not big), then $c_1(X)^2=0$ and $c_2(X)\geq 12$.

The goal of this paper is to study Question~\ref{ques.main} in dimension 3, which is also motivated by the classification theory of $3$-folds.
For a terminal projective $3$-fold $X$ with $K_X\equiv 0$, $c_1(X)\cdot c_2(X)=0$ holds trivially. Such $3$-folds are known as ``Calabi--Yau $3$-folds" and have already been studied in many references (for example, \cites{kawamata, morrison, oguiso, op, jiang16}), so we will always assume that $K_X\not\equiv 0$. 
In this case, there are also many interesting results, see
 \cites{bp, cao-horing, xie1, bcs, matsumura-wang, lmptx, xie2} and the references therein.

Our main theorem is the following.

\begin{thm}\label{thm.main.bound}
Let $X$ be a terminal projective $3$-fold such that $-K_X$ is nef. Suppose that $c_1(X)\cdot c_2(X)\neq 0$. 
\begin{enumerate}
 \item Then $c_1(X)\cdot c_2(X)\geq \frac{1}{252}$. 
 \item If $-K_X$ is not big, then $c_1(X)\cdot c_2(X)\geq \frac{2}{5}$.
 \item If $X$ is not rationally connected, then $c_1(X)\cdot c_2(X)\geq \frac{4}{5}$.
\end{enumerate}
\end{thm}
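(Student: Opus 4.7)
The plan is to combine Reid's orbifold Riemann--Roch formula for terminal $3$-folds with the cohomological and birational structure of terminal $3$-folds carrying nef anti-canonical divisor. For a terminal projective $3$-fold $X$, Reid's formula gives
\[
c_1(X)\cdot c_2(X)=24\chi(\mathcal O_X)-\sum_{Q\in\mathcal B_X}\Bigl(r_Q-\tfrac{1}{r_Q}\Bigr),
\]
where $\mathcal B_X$ is the basket of fictitious cyclic quotient singularities of indices $r_Q$. Combined with Ou's inequality $c_1(X)\cdot c_2(X)\geq 0$, bounding $c_1(X)\cdot c_2(X)$ from below is equivalent to controlling $\chi(\mathcal O_X)$ from above and $\mathcal B_X$ in a disciplined way: one needs $24\chi(\mathcal O_X)\geq \sum(r_Q-1/r_Q)+\varepsilon$ for the appropriate $\varepsilon\in\{1/252,\,2/5,\,4/5\}$.

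Because weak Fano terminal $3$-folds are rationally connected (Zhang, Hacon--McKernan), the three cases form a nested hierarchy: (3) strictly strengthens the hypothesis of (2), and in (1) and (2) we may and will assume $X$ is rationally connected after treating (3) separately. For (3) I would invoke the structural results of Cao--H\"oring and Matsumura--Wang cited in the introduction: a non-rationally-connected terminal $X$ with $-K_X$ nef and $K_X\not\equiv 0$ must admit a nontrivial MRC or Albanese fibration $X\dashrightarrow Y$ onto a positive-dimensional non-uniruled base with $\dim Y\in\{1,2\}$. A canonical bundle formula analysis of this fibration, together with the fact that $X$ has rational singularities, bounds $\chi(\mathcal O_X)\leq 0$ in most subcases; the sharp bound $4/5$ then emerges by identifying the extremal example, plausibly an elliptic or conic bundle over a curve of positive genus, consistent with the ``many examples'' promised in the abstract.

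For (1) and (2), $X$ is now rationally connected with rational singularities, so $h^i(\mathcal O_X)=0$ for $i>0$ and $\chi(\mathcal O_X)=1$. Reid's identity becomes the purely combinatorial condition
\[
c_1(X)\cdot c_2(X)=24-\sum_{Q\in\mathcal B_X}\Bigl(r_Q-\tfrac{1}{r_Q}\Bigr),
\]
and one must classify which baskets $\mathcal B_X$ can occur. For (2), when $-K_X$ is not big, I would run an MMP and exploit the Iitaka fibration of $-K_X$ with $\kappa(-K_X)\in\{1,2\}$, whose conic or elliptic fibre structure together with the canonical bundle formula sharply restricts the admissible $r_Q$, yielding the value $\geq 2/5$. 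For (1), boundedness of terminal weak Fano $3$-folds (Kawamata, Birkar) makes the set of possible baskets finite, so one identifies the minimizer $1/252$ via an explicit partial classification.

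The main obstacle is part (1), namely producing and verifying the extremal weak Fano family attaining $c_1\cdot c_2=1/252$ and ruling out every other basket. This demands a delicate case analysis of Mori contractions from singular terminal weak Fano $3$-folds, combined with Ou's pseudo-effectivity and explicit intersection computations; concretely, one must show that any basket with $\sum(r_Q-1/r_Q)>24-1/252=6047/252$ either violates terminality, contradicts Ou's inequality on some divisorial contraction, or cannot be globalized on a projective terminal $3$-fold.
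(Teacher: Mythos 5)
Your skeleton (Reid's orbifold Riemann--Roch plus Ou's pseudo-effectivity, with a case split governed by rational connectedness) matches the paper's, and your observation that ``not rationally connected'' implies ``$-K_X$ not big'' is correct. But each of the three parts has a genuine gap. For part (3), your claim that $\chi(\mathcal O_X)\leq 0$ ``in most subcases'' points in the wrong direction: by Reid's formula $\chi\leq 0$ forces $c_1c_2\leq 0$, hence $c_1c_2=0$ by Ou, so those subcases are vacuous under the hypothesis $c_1c_2\neq 0$; the paper shows that a singular $X$ always has $\chi(\mathcal O_X)\in\{1,2\}$. Your guessed extremal example, a bundle over a curve of positive genus, cannot occur: positive irregularity forces $X$ to be smooth (Lemma~\ref{lem:q>0 smooth}), and the smooth non-rationally-connected cases give $c_1c_2\in\{0,24,48\}$. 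The actual content of part (3) is a classification: the structure theorem forces $X=X'/G$ with $X'$ a $\mathbb P^1$-bundle over an abelian surface (giving $c_1c_2=0$) or $\mathbb P^1\times(\text{K3})$ with $G$ acting diagonally, and then $c_1c_2=48/|G|$ with $G$ constrained by Xiao's classification of K3 quotients, the minimum $4/5$ coming from $G=\mathfrak A_5$. Nothing in your outline produces this.

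For part (2), the key lemma you are missing is arithmetic, not geometric: since $(-K_X)^3=0$, Reid's Riemann--Roch forces the correction term $l(2)=\sum_i b_i(r_i-b_i)/(2r_i)$ to be an integer, which drastically cuts down the admissible baskets and yields $2/5$. Your proposed MMP/Iitaka-fibration route is explicitly problematic (the paper remarks that nefness of $-K_X$ is lost under the MMP, and $\kappa(-K_X)$ need not be $1$ or $2$). For part (1), you misplace the difficulty: no extremal family needs to be produced and no basket needs to be ruled out geometrically. Since $\chi(\mathcal O_X)=1$ in the rationally connected case, $c_1c_2=24-\sum_i(r_i-1/r_i)$, each term $r_i-1/r_i\geq 3/2$, so there are finitely many multisets with sum at most $24$, and $1/252$ is simply the minimum positive value of $24-\sum_i(r_i-1/r_i)$ over this finite list (attained combinatorially by $\{2^3,4,7,9\}$, a computer check). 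The paper does not claim this bound is sharp, so your ``main obstacle'' of realizing and excluding baskets on actual $3$-folds is not part of the proof at all.
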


 Theorem~\ref{thm.main.bound} is obtained by a partial classification of terminal projective $3$-folds with nef anti-canonical divisors.
 We briefly summarize the results in the following.

\begin{thm}\label{thm.roughclassify}
Let $X$ be a terminal projective $3$-fold such that $-K_X$ is nef and $K_X\not\equiv 0$. 
Then $X$ is one of the following:
\begin{enumerate}
 \item (Theorem~\ref{thm.smooth}, cf. \cite{bp}) $X$ is smooth and 
 \begin{enumerate}
 
 \item $X$ is a $\mathbb P^1$-bundle over an abelian surface or a bi-elliptic surface; 

 \item $X$ admits a locally trivial fibration over an elliptic curve, whose fibers are rationally connected; 
 
 \item $X$ is a product of $\mathbb P^1$ with a K3 surface; 

 \item $X$ is a $\mathbb P^1$-bundle over an Enriques surface,
	which is trivialized by the universal double \'etale cover; 

 \end{enumerate}

 \item $X$ is rationally connected;
 
 \item (Theorem~\ref{thm:X=X'/G}) $X$ is singular and not rationally connected, and $X=X'/G$ where $X'$ is either a $\mathbb{P}^1$-bundle over an abelian surface or a product of $\mathbb{P}^1$ with a K3 surface, and $G$ is a group acting on $X'$ which can be classified. 
\end{enumerate} 
\end{thm}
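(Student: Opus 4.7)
The plan is to argue by a trichotomy based on smoothness and rational connectedness of $X$. If $X$ is smooth, then cases (1a)--(1d) follow by directly appealing to Theorem~\ref{thm.smooth}, which refines the Bauer--Peternell classification in the presence of nef $-K_X$. If $X$ is rationally connected, we land in case (2) by definition. The substantive content is therefore the remaining case: $X$ is singular and not rationally connected, which is precisely case (3).

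To handle this case, I would construct a quasi-\'etale Galois cover $X' \to X$ with group $G$ such that $X'$ is a smooth projective $3$-fold, still carries nef $-K_{X'}$, and remains non-rationally-connected. Such a cover should come from combining several ingredients: the fact that terminal $3$-fold singularities are isolated cDV with finite local fundamental groups (so that a cover trivialising the index can be globalised), the nonvanishing of the augmented irregularity forced by the failure of rational connectedness via Cao--H\"oring--type structure results (cf.~\cite{cao-horing, lmptx}), and Ou's result \cite{ou} on the pseudo-effectivity of $c_2$, which constrains the global geometry. Once $X'$ is produced, Theorem~\ref{thm.smooth} forces $X'$ to be one of the four smooth types (1a)--(1d).

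The next step is to rule out (1b) and (1d) for $X'$. In (1b), $X'$ is a locally trivial fibration with rationally connected fibers over an elliptic curve; any $G$-action compatible with such a fibration would either fix the base (forcing $X$ to inherit a non-rationally-connected smooth structure) or collapse it (forcing $X$ to become rationally connected), in either case contradicting the hypothesis that $X$ is singular and non-rationally-connected. In (1d), $X'$ is a $\mathbb{P}^1$-bundle over an Enriques surface, which is itself already trivialised by its universal double \'etale cover; a further quotient does not produce new genuinely singular examples of the required form. Therefore $X'$ must be either a $\mathbb{P}^1$-bundle over an abelian surface, or a product $\mathbb{P}^1 \times S$ with $S$ a K3 surface, as claimed.

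The main obstacle, and the technical heart of the classification, is the detailed analysis of admissible $G$-actions on $X'$, which is the content of Theorem~\ref{thm:X=X'/G}. One must enumerate all finite groups acting on a $\mathbb{P}^1$-bundle over an abelian surface, or on $\mathbb{P}^1\times$K3, in such a way that the quotient is terminal, has nef anti-canonical divisor, is singular, and remains non-rationally-connected. This requires a careful study of the automorphism groups of the base surfaces, their compatible lifts to the total space, the associated fixed loci, and the terminality of the resulting quotient singularities. I expect this combinatorial and geometric case work, rather than the cover construction itself, to be the most delicate step.
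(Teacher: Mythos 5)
Your top-level trichotomy (smooth / rationally connected / singular and not rationally connected) is exactly the paper's proof of Theorem~\ref{thm.roughclassify}: the smooth case is Theorem~\ref{thm.smooth}, the rationally connected case is item (2) by definition, and the singular non-rationally-connected case is Theorem~\ref{thm:X=X'/G}. Since item (3) of the statement explicitly refers to Theorem~\ref{thm:X=X'/G}, citing it is all that is required, and at that level your proof is complete and coincides with the paper's.

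Where you go further and sketch a proof of Theorem~\ref{thm:X=X'/G}, the sketch deviates from the actual argument in ways that would not survive being made precise. First, the smoothness of the cover $X'$ does not come from trivializing the local indices of the terminal points (an index-one cover of a terminal $3$-fold is in general still singular, with cDV points), and Ou's pseudo-effectivity of $c_2$ plays no role here. In the paper, smoothness is a \emph{consequence} of the Matsumura--Wang structure theorem (Theorem~\ref{thm.structure}) combined with Lemma~\ref{lem:nonrc-Y-not-curve}, which forces the base $Y$ of the locally constant MRC fibration of $X'$ to have dimension $2$, hence to be a smooth terminal surface with $K_Y\sim_{\mathbb Q}0$, so that $X'$ is a $\mathbb{P}^1$-bundle over a smooth surface. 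The elimination of $\dim Y=1$ there is the rigorous version of your attempt to rule out case (1b): one shows the MRC quotient of $X$ itself would be a non-uniruled curve, giving $q(X)>0$ and hence $X$ smooth by Lemma~\ref{lem:q>0 smooth}, a contradiction. Second, your dismissal of case (1d) as ``not producing new genuinely singular examples'' is misleading: quotients whose intermediate surface $S=Y/G$ has an Enriques surface as minimal resolution genuinely occur (Theorem~\ref{thm:X=X'/G}(4) and Table~\ref{tab5}); they are not excluded but \emph{absorbed} by composing with the universal double \'etale cover so that $X'$ becomes $\mathbb{P}^1\times(\mathrm{K3})$, and likewise the bi-elliptic base in (1a) must be reduced to the abelian case by a further \'etale cover, a step you omit. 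These reductions, together with passing to the Galois closure of the cover, are what make the statement ``$X=X'/G$ with $X'$ of one of the two listed types'' correct.
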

From this classification, we can describe the structure of $X$ when $c_1(X)\cdot c_2(X)=0$.

\begin{cor}\label{cor.roughclassify=0}
Let $X$ be a terminal projective $3$-fold such that $-K_X$ is nef and $K_X\not\equiv 0$. 
Then $c_1(X)\cdot c_2(X)=0$ if and only if $X$ is one of the following:
\begin{enumerate}
 \item $X$ admits a quasi-\'etale cover by a $\mathbb P^1$-bundle over an abelian surface; 

 \item $X$ admits a locally trivial fibration over an elliptic curve, whose fibers are rationally connected; 
 
 \item $X$ is rationally connected, $K_X^3=0$, and the set $\mathcal{R}_X$ of local indices of the virtue singularities (see \S\ref{sec:hec} for the definition) belongs to 
 Table~\ref{tab1} of Theorem~\ref{thm.list} except types $(5), (6)$ and $(10)$.
 
\end{enumerate} 
\end{cor}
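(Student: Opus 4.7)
The strategy is to combine Theorem~\ref{thm.roughclassify} with an explicit computation of $c_1(X)\cdot c_2(X)$ in each case, and then retain only those cases where this Chern number vanishes. For the ``if'' direction the computations are routine; for the ``only if'' direction the rough classification does essentially all of the work, so what remains is to prune the list.

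First I would handle case (1) of Theorem~\ref{thm.roughclassify}, where $X$ is smooth. In subcase (1)(a), for a $\mathbb P^1$-bundle $\pi\colon X\to S$ with $S$ an abelian or bi-elliptic surface, one has $c_1(S)\equiv 0$ and $c_2(S)=0$, so the Whitney formula applied to the relative tangent sequence $0\to T_{X/S}\to T_X\to \pi^*T_S\to 0$ together with the projection formula gives $c_1(X)\cdot c_2(X)=0$; a bi-elliptic base is itself a quasi-étale quotient of an abelian surface, so pulling the bundle back along that cover realizes $X$ as in outcome (1) of the corollary. In subcase (1)(b), all Chern classes are pulled back via the fibration over the elliptic curve in the relevant degree, so $c_1(X)\cdot c_2(X)=0$ trivially, giving outcome (2). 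In subcase (1)(c), a direct computation on $\mathbb P^1\times \mathrm{K3}$ yields $c_1(X)\cdot c_2(X)=2\cdot 24=48$; in subcase (1)(d) the universal étale double cover is of this product type, so $c_1(X)\cdot c_2(X)=24$. Neither vanishes, so these subcases do not contribute.

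Next I would treat case (3) of Theorem~\ref{thm.roughclassify}, where $X=X'/G$ with $X'\to X$ quasi-étale. Since $X$ is terminal, $c_1(X)\cdot c_2(X)=\frac{1}{|G|}c_1(X')\cdot c_2(X')$, so the previous computation shows the vanishing occurs exactly when $X'$ is a $\mathbb P^1$-bundle over an abelian surface, producing additional instances of outcome (1) and no new possibilities.

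Finally, for case (2) of Theorem~\ref{thm.roughclassify}, where $X$ is rationally connected, I would invoke Theorem~\ref{thm.list}: Reid's singular Riemann--Roch for terminal $3$-folds expresses $\chi(\mathcal O_X)$ as $-\frac{1}{24}c_1(X)\cdot c_2(X)$ plus a contribution depending only on the multiset $\mathcal R_X$ of local indices, and for rationally connected $X$ one has $\chi(\mathcal O_X)=1$, so the vanishing of $c_1(X)\cdot c_2(X)$ is equivalent to a purely arithmetic condition on $\mathcal R_X$. Cross-checking this condition against the entries of Table~\ref{tab1} of Theorem~\ref{thm.list} identifies precisely the types excluded in outcome (3), namely $(5), (6)$ and $(10)$; the condition $K_X^3=0$ follows because if $-K_X$ were big then Kawamata's generic semipositivity plus Miyaoka--Yau-type inequalities would force $c_1(X)\cdot c_2(X)>0$ (this is the content of the sharper bound in Theorem~\ref{thm.main.bound}(2)). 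The main obstacle of the argument is exactly this last step: verifying that the excluded singularity types in the table are the ones for which the singular Riemann--Roch correction fails to cancel against $\chi(\mathcal O_X)=1$, which reduces to a finite check once Theorem~\ref{thm.list} is in hand.
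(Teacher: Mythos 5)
Your overall architecture matches the paper's: split according to Theorem~\ref{thm.roughclassify}, compute $c_1\cdot c_2$ in the smooth and quotient cases, and in the rationally connected case use the nonvanishing for weak Fanos to force $K_X^3=0$ and then appeal to Theorem~\ref{thm.list}. (Two small slips along the way: the statement ``$-K_X$ big $\Rightarrow c_1(X)\cdot c_2(X)>0$'' is Corollary~\ref{cor.fano}, not Theorem~\ref{thm.main.bound}(2), although the mechanism you name --- generic nefness plus Bogomolov--Gieseker --- is indeed how the paper proves it; and in case (1)(b) the vanishing comes from $\chi(\mathcal O_X)=\chi(\mathcal O_F)\chi(\mathcal O_E)=0$ together with $c_1c_2=24\chi(\mathcal O_X)$, not from the Chern classes being pulled back from the curve, since $c_1(X)$ restricts nontrivially to the fibers.)

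The genuine gap is in your explanation of why types $(5),(6),(10)$ of Table~\ref{tab1} are excluded in outcome (3). You say the exclusion comes from checking which entries of Table~\ref{tab1} make the Riemann--Roch correction term cancel against $\chi(\mathcal O_X)=1$; but that cancellation, namely $\sum_i(r_i-\tfrac1{r_i})=24$, is exactly the defining condition of Table~\ref{tab1} and is satisfied by \emph{all eleven} entries, so this check excludes nothing. The actual reason is the extra arithmetic constraint coming from non-bigness: since $(-K_X)^3=0$, Lemma~\ref{lem.lint} forces $l(2)=\sum_i \frac{b_i(r_i-b_i)}{2r_i}\in\mathbb Z$ for the basket $B_X$, and for the multisets $\mathcal R_X$ of types $(5),(6),(10)$ no choice of $b_i$'s makes $l(2)$ an integer --- equivalently, these three types do not reappear among the $c_1c_2=0$ entries of Table~\ref{tab2} in Theorem~\ref{thm.list}(2). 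Without invoking Lemma~\ref{lem.lint} (or, what amounts to the same thing, part (2) of Theorem~\ref{thm.list} rather than part (1)), your argument only shows that $\mathcal R_X$ lies somewhere in Table~\ref{tab1}, and the exclusion of $(5),(6),(10)$ asserted in the corollary remains unproved.
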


\begin{rem}The lower bound of Theorem~\ref{thm.main.bound}(3) is sharp by Example~\ref{ex.3}, while the lower bounds of Theorem~\ref{thm.main.bound}(1)(2) might not be sharp. The main difficulty is the case when $X$ is non-Gorenstein and rationally connected, in which case there are no good methods to classify $X$ in a more explicit way. For example, one natural strategy in birational geometry is to run a minimal model program, and then describe each step of the minimal model program and classify the outcomes (which are Mori fiber spaces in this case). But so far neither of these can be done in an explicit way as singularities are involved. Also, we will lose the nefness of $-K_X$ when running the minimal model program.
\end{rem}

Finally, we focus on the case when $-K_X$ is nef and big. Such $X$ is usually called a \emph{weak Fano} variety; and $X$ is called a \emph{Fano} variety if $-K_X$ is ample. Terminal weak Fano $3$-folds have been also studied in many references (see \cites{cc, jiang-zou} and the references therein). 
In fact, $c_1c_2>2$ for the majority of known examples of terminal Fano $3$-folds (see Example~\ref{ex.fano}). So it is interesting to improve the lower bound of $c_1c_2$ for terminal weak Fano $3$-folds or to find new examples with small 
$c_1c_2$. 

Another interesting problem is whether there exist terminal weak Fano $3$-folds 
with $c_1c_2=0$. In this direction, we prove that this can not happen.

\begin{thm}[Corollary~\ref{cor.fano}]\label{thm.main.fano}
Let $X$ be a terminal weak Fano variety of dimension $n$.
Then $c_1(X)^{n-2}\cdot c_2(X)>0$.
\end{thm}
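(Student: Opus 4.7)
The plan is to derive Theorem~\ref{thm.main.fano} as an immediate corollary of the Miyaoka--Kawamata type inequality that the abstract announces and that I expect the paper to establish for every terminal projective variety $X$ of dimension $n$ with $-K_X$ nef. I anticipate this inequality to take, or to be equivalent to, the closed form
\[
3\,c_1(X)^{n-2}\cdot c_2(X)\;\geq\;c_1(X)^{n},
\]
the natural mirror of the classical Miyaoka--Yau bound in the setting where $-K_X$ (rather than $K_X$) is nef. Given such an inequality the argument is one line: the weak Fano hypothesis makes $-K_X$ both nef and big, so $c_1(X)^n=(-K_X)^n>0$, and hence $c_1(X)^{n-2}\cdot c_2(X)\geq \tfrac{1}{3}(-K_X)^n>0$. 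Any equivalent reformulation---for example, pseudo-effectivity of $3c_2-c_1^2$ against products of nef classes in the sense of Ou---suffices, since we only ever need to pair against $-K_X$ itself.

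If the ambient inequality is not available in such clean form, a fallback is induction on $n$. For the base case $n=3$, Corollary~\ref{cor.roughclassify=0} already classifies terminal projective $3$-folds with $-K_X$ nef, $K_X\not\equiv 0$, and $c_1\cdot c_2=0$: cases (1) and (2) produce a dominant map onto a positive-dimensional abelian or elliptic base, so $X$ is not rationally connected, whereas a terminal weak Fano necessarily is; case (3) forces $K_X^3=0$, contradicting bigness. Hence $c_1(X)\cdot c_2(X)\neq 0$, and Theorem~\ref{thm.main.bound}(1) yields $c_1\cdot c_2\geq 1/252>0$. For the inductive step $n\geq 4$, take a general $H\in|{-mK_X}|$ with $m$ large enough that the base-point-free theorem and Bertini both apply in the terminal category; then $H$ is a terminal weak Fano of dimension $n-1$ with $-K_H=(m-1)(-K_X)|_H$, and the Whitney formula applied to $0\to T_H\to T_X|_H\to N_{H/X}\to 0$ produces
\[
(-K_X)^{n-2}\cdot c_2(X)\;=\;\frac{1}{m(m-1)^{n-3}}(-K_H)^{n-3}\cdot c_2(H)\;+\;m(m-1)(-K_X)^{n}.
\]
The first summand is strictly positive by the inductive hypothesis applied to $H$, the second is strictly positive by bigness of $-K_X$, and their sum is what we want.

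The genuine obstacle is not this deduction but the underlying Miyaoka--Kawamata statement for terminal varieties; its proof should combine Ou's pseudo-effectivity of $c_2$ with a generic semi-positivity statement for $\Omega_X$, tracked via $\mathbb{Q}$-Chern classes on a terminalization. Relative to that technical heart of the paper, Theorem~\ref{thm.main.fano} becomes a clean corollary: the weak Fano hypothesis upgrades the non-strict lower bound on $(-K_X)^{n-2}\cdot c_2(X)$ to a strict one precisely because $(-K_X)^n$ is then forced to be positive.
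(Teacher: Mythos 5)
Both of your routes are circular relative to the paper's logical structure, and the inductive step contains a sign error that kills it outright. The Miyaoka--Kawamata inequality you want to quote in the first route is not an input to this theorem but an output: in the paper, Corollary~\ref{cor.chernbound} is deduced \emph{from} Corollary~\ref{cor.fano} (the statement you are proving) together with Birkar's boundedness, the explicit constant obtained in dimension $3$ is $2^4\cdot 3^6\cdot 7$, and $b=3$ is explicitly flagged as conjectural in Remark~\ref{rem.b=3}; no inequality of the form $3c_1(X)^{n-2}\cdot c_2(X)\geq c_1(X)^n$ for all terminal $X$ with $-K_X$ nef is available. The same circularity infects your base case: the proof of Corollary~\ref{cor.roughclassify=0} invokes Corollary~\ref{cor.fano} precisely to conclude that a rationally connected $X$ with $c_1(X)\cdot c_2(X)=0$ cannot have $-K_X$ big. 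Without that, Theorem~\ref{thm.list}(1) only tells you that $\mathcal{R}_X$ lies in Table~\ref{tab1}; the types there are not excluded for big $-K_X$ by any numerical argument, and excluding them for weak Fanos is exactly the content of Theorem~\ref{thm.fano}.

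The inductive step fails for an independent reason: adjunction for a general $H\in|-mK_X|$ gives $K_H=(K_X+H)|_H=(m-1)(-K_X)|_H$, so it is $K_H$, not $-K_H$, that is nef and big; $H$ is a minimal variety of general type, not a weak Fano of dimension $n-1$, and the inductive hypothesis does not apply to it. Correspondingly the cross term $c_1(\mathcal T_H)\cdot c_1(N_{H/X})$ in the Whitney formula equals $-m(m-1)\bigl((-K_X)|_H\bigr)^2$, so the second summand in your displayed identity enters with a negative sign and the two positive contributions you need are not both there. The paper's actual argument is of a completely different nature: it takes the Harder--Narasimhan filtration of $\mathcal{T}_X$ with respect to $(-K_X+\varepsilon H)^{n-1}$, uses Ou's generic nefness together with the Bogomolov--Gieseker inequality and the Hodge index theorem to reduce to the case where the maximal destabilizing subsheaf is a rank-one foliation absorbing all of $c_1(X)^n$, and then derives a contradiction from the rational connectedness of its leaves (Kebekus--Sol\'a Conde--Toma) combined with Druel's pseudo-effectivity criterion; the non-$\mathbb{Q}$-factorial case is then handled by passing to a $\mathbb{Q}$-factorialization. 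If you want a self-contained proof you need some version of that foliation-theoretic input; there is no shortcut through the dimension-$3$ classification or through a Miyaoka-type inequality.
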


As a corollary, we can show a Miyaoka--Kawamata type inequality (see \cites{kawamata92, bk} for the case $\rho(X)=1$).
\begin{cor}[Corollaries~\ref{cor.chernbound} and~\ref{cor.effbd}]\label{cor.effbd1}
There exists a positive rational number $b$ depending only
on $n$ such that the inequality 
\[
c_1(X)^n\leq b \cdot c_1(X)^{n-2}\cdot c_2(X)
\]
holds for any terminal weak Fano variety $X$ of dimension $n$.
In particular, we can choose $b=2^4\cdot 3^6\cdot 7$ when $n=3$.
\end{cor}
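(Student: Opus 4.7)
The plan is to combine the strict positivity of $c_1(X)^{n-2}\cdot c_2(X)$ provided by Theorem~\ref{thm.main.fano} with the boundedness of terminal weak Fano $n$-folds and a control on the denominator of $c_1(X)^{n-2}\cdot c_2(X)$. First I would invoke Birkar's BAB theorem: terminal (hence $1$-lc) weak Fano varieties of dimension $n$ form a bounded family, which yields a uniform upper bound $c_1(X)^n \leq M(n)$ on the anti-canonical volume and a uniform upper bound on the global Cartier index of $K_X$. The latter forces $c_1(X)^{n-2}\cdot c_2(X) \in \tfrac{1}{N(n)}\mathbb{Z}$ for some $N(n)$ depending only on $n$. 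Theorem~\ref{thm.main.fano} then upgrades the strict positivity to a uniform lower bound $c_1(X)^{n-2}\cdot c_2(X) \geq \tfrac{1}{N(n)}$, and taking $b(n) := M(n)\cdot N(n)$ proves the existence part.

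For the explicit constant when $n = 3$, I would argue more directly via Reid's Riemann--Roch. Since $-K_X$ is nef and big and $X$ is klt, Kawamata--Viehweg vanishing gives $\chi(\mathcal{O}_X) = 1$, and Reid's formula reads
\[
1 \;=\; \chi(\mathcal{O}_X) \;=\; \tfrac{1}{24}\, c_1(X)\cdot c_2(X) \;-\; \sum_{P}\tfrac{r_P^2 - 1}{24\, r_P},
\]
where $P$ runs over the basket of terminal quotient singularities of $X$. Since each correction term is nonnegative, this yields $c_1(X)\cdot c_2(X) \geq 24$ unconditionally, so the corollary reduces to an effective upper bound $c_1(X)^3 \leq 24\cdot 2^4\cdot 3^6\cdot 7$ for terminal weak Fano $3$-folds.

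This effective bound on $c_1(X)^3$ I would derive from Reid's plurigenus formula together with the nonnegativity of $h^0(-mK_X)$ for small $m$ (computing $h^0$ from $\chi$ via Kawamata--Viehweg vanishing), applied over the finite list of admissible baskets of terminal quotient singularities on weak Fano $3$-folds. The main obstacle will be producing precisely the factorization $2^4\cdot 3^6\cdot 7$: BAB alone is noneffective, so one must rely on this explicit Reid--RR bookkeeping and handle the delicate combinatorics of baskets containing several singular points or large local indices, where many competing linear inequalities in $c_1(X)^3$, $c_1(X)\cdot c_2(X)$, and the basket data must be combined. The prime powers $2^4,\,3^6,\,7$ should then emerge from the local contributions of an extremal basket that saturates the bound.
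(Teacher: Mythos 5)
Your existence argument is broadly in the spirit of the paper's: the paper also combines Birkar's BAB theorem with Theorem~\ref{thm.main.fano}, but it concludes by observing that $\frac{c_1(X)^n}{c_1(X)^{n-2}\cdot c_2(X)}$ is locally constant in flat families, hence takes finitely many values over a bounded family. Your variant instead tries to bound the denominator of $c_1(X)^{n-2}\cdot c_2(X)$ by the global Cartier index of $K_X$. That step is not justified in arbitrary dimension: $c_2(X)$ is defined by extension from the smooth locus, and there is no general statement that its intersection numbers lie in $\frac{1}{N}\mathbb{Z}$ with $N$ controlled only by the Cartier index (for $3$-folds this follows from Reid--Kawamata's formula, but that is special to $n=3$). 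This part is repairable by switching to the deformation-invariance argument, so I would call it a fixable gap rather than a fatal one.

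The $n=3$ part, however, contains a genuine error: you have the sign of the singularity correction in Reid's formula backwards. The correct statement (Theorem~\ref{thm.euler}) is
\[
\chi(\mathcal O_X)=\frac{1}{24}c_1(X)\cdot c_2(X)+\frac{1}{24}\sum_{r_i\in\mathcal R_X}\Bigl(r_i-\frac{1}{r_i}\Bigr),
\]
so with $\chi(\mathcal O_X)=1$ one gets $c_1(X)\cdot c_2(X)=24-\sum_i(r_i-\frac{1}{r_i})$: the basket contributions \emph{decrease} $c_1c_2$ below $24$, they do not increase it. Your claimed inequality $c_1(X)\cdot c_2(X)\geq 24$ is false for non-Gorenstein terminal weak Fano $3$-folds; indeed Example~\ref{ex.fano} exhibits a terminal Fano $3$-fold with $c_1c_2=\frac{431}{180}<3$, and the true lower bound used by the paper is $c_1(X)\cdot c_2(X)\geq\frac{1}{252}$, obtained in Theorem~\ref{thm.list}(1) by a computer enumeration of all baskets with $\sum_i(r_i-\frac{1}{r_i})\leq 24$ (the extremal case being $\mathcal R_X=\{2^3,4,7,9\}$). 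Consequently your reduction of the corollary to the bound $c_1(X)^3\leq 24\cdot 2^4\cdot 3^6\cdot 7$ starts from a false premise, and the constant does not arise the way you predict: the paper gets $b=2^4\cdot 3^6\cdot 7=324\cdot 252$ by combining the Jiang--Zou bound $(-K_X)^3\leq 324$ (a substantial theorem quoted as a black box, not a short Reid--RR bookkeeping exercise) with the lower bound $\frac{1}{252}$ on $c_1c_2$.
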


\begin{rem}
The universal upper bound $b= 2^4\cdot 3^6\cdot 7$ in Corollary~\ref{cor.effbd1} is far from being sharp. 
We would conjecture that $b$ should be 3, the same as the constant in \cite{miyaoka}*{Theorem 1.1} (see Remark~\ref{rem.b=3}).
\end{rem}

\begin{rem}
Recently, the third author and Jie Liu proved in \cite{liu-liu} that
for terminal $\mathbb Q$-Fano varieties, i.e., $\mathbb Q$-factorial terminal Fano varieties with Picard number one, 
the bound $b$ in Corollary \ref{cor.effbd1} can be improved to be $4$ (and to be $25/8$ in dimension 3).
\end{rem}

\begin{ack}
The authors would like to thank Professors Vladimir Lazi\'c,
Shin-ichi Matsumura, Thomas Peternell and Shilin Yu for useful discussions
and suggestions. 
The first author would like to thank Kento Fujita for useful comments. 
The third author would like to thank Xiaobin Li for his computer support. We thank the referees for useful comments and suggestions.

This work was supported by National Natural Science Foundation of China for Innovative Research Groups (Grant No. 12121001) and National Key Research and Development Program of China (Grant No.~2020YFA0713200).
The first author was supported by Grant-in-Aid for Early Career Scientists $\sharp$ 22K13907.
The second author is a member of LMNS, Fudan University.
\end{ack}

Throughout this paper, we work over the complex number field $\mathbb C$.
We will freely use the basic notation in \cites{kollar-mori}.

\section{A structure theorem of Cao--H\"oring and Matsumura--Wang}

Let $X$ be a terminal projective variety such that $-K_X$ is nef.
There exists a structure theorem of $X$
by \cite{cao-horing}*{Theorem 1.3}
for smooth cases and \cite{matsumura-wang}*{Theorem 1.1} for klt cases. We recall these results in the setting of terminal singularities as follows.

\begin{defn}
    Let $f\colon X \to Y$ be a proper surjective morphism between normal varieties with connected fibers. Then $f$ is called a \emph{locally constant fibration} if $f$ is a locally trivial fiber bundle with a representation
    \[
        \rho\colon \pi_1(Y) \to \Aut(F),
    \]
    where $\pi_1(Y)$ is the fundamental group of $Y$ and $F$ is the fiber of $f$, such that $X$ is isomorphic to the quotient of $\widetilde Y\times F$ by the action of $\pi_1(Y)$ given by $\delta\cdot (\widetilde y,z)=(\delta\cdot \widetilde y, \rho(\delta)(z))$ where $\widetilde Y$ is the universal cover of $Y$.
\end{defn}

\begin{thm}[{cf. \cites{cao-horing, wang, matsumura-wang}}]\label{thm.structure}
Let $X$ be a terminal projective variety such that $-K_X$ is nef. Then,
there exists a finite cover $\pi\colon X'\to X$ \'etale in codimension $2$ 
such that $X'$ admits a holomorphic MRC fibration $f\colon X'\to Y$
satisfying the following properties:
\begin{enumerate}[(1)]
 \item $f$ is locally constant;

 \item any fiber $F$ of $f$ is a rationally connected variety with terminal singularities;

 \item $Y$ is a terminal projective variety with $K_Y\sim_{\mathbb Q} 0$.
\end{enumerate}
Moreover, if $X$ is smooth, 
then we may choose $\pi$ to be the identity map and $Y$ is smooth. 
\end{thm}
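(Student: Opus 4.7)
The plan is to deduce Theorem~\ref{thm.structure} from the existing structure theorems in the klt and smooth settings, and then verify that terminality is preserved at each stage. Since terminal implies klt, I would first apply the Matsumura--Wang structure theorem \cite{matsumura-wang}*{Theorem 1.1} to $X$: this produces a finite quasi-étale cover $\pi\colon X'\to X$ together with a holomorphic, locally constant MRC fibration $f\colon X'\to Y$ whose fibers $F$ are rationally connected and whose base $Y$ is klt with $K_Y\sim_{\mathbb Q}0$. This immediately gives condition (1), the rational connectedness in (2), and the numerical triviality in (3).

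Next, I would upgrade ``quasi-étale'' to ``étale in codimension $2$.'' Here I would use that terminal singularities are smooth in codimension $2$ (terminal surface singularities are smooth). Applying purity of the branch locus over the smooth locus $X_{\mathrm{reg}}$, the branch locus of $\pi$ is contained in $\Sing X$, which has codimension $\geq 3$; so $\pi$ is étale outside a set of codimension $\geq 3$, as required.

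The third and most substantive step is to upgrade ``klt'' to ``terminal'' for both $X'$, $F$, and $Y$. Because $\pi$ is étale in codimension $2$, a log resolution of $X$ pulls back to a log resolution of $X'$ on which each exceptional prime divisor has the same discrepancy as the corresponding divisor over $X$; hence $X'$ is terminal. For $F$ and $Y$, local triviality of $f$ means that analytic-locally on $Y$ the total space is isomorphic to $U\times F$ for some open $U\subset Y$. Terminality is an analytic-local condition, and on a product the discrepancy of a divisor coming from one factor equals its discrepancy in that factor (log resolutions and canonical classes split as products). Hence terminality of $X'$ forces terminality of each factor, giving (2) and (3).

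Finally, the ``moreover'' assertion in the smooth case is handled by appealing directly to the Cao--Höring theorem \cite{cao-horing}*{Theorem 1.3}, which constructs a locally constant MRC fibration $X\to Y$ over a smooth base when $X$ itself is already smooth, so $\pi$ may be taken to be the identity. The main anticipated obstacle is the product-splitting of discrepancies used in step three: while conceptually clear, stating it rigorously in the locally constant (hence only analytic-local) setting requires being careful with the identification of exceptional divisors on $X'$ versus on $U\times F$, and with the fact that terminality is preserved and reflected under passage to a formal or analytic neighborhood. Everything else is essentially packaging of results already available in the literature.
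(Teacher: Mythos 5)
Your proposal is correct and follows essentially the same route as the paper: invoke Matsumura--Wang for the quasi-\'etale cover and locally constant MRC fibration, use purity of the branch locus over the smooth locus together with the fact that terminal singularities are smooth in codimension $2$ to upgrade to \'etale in codimension $2$, deduce terminality of $X'$ from the discrepancy comparison for covers \'etale in codimension $1$ (this is exactly \cite{kollar-mori}*{Proposition~5.20}, which you may cite instead of re-deriving it via pulled-back resolutions), conclude terminality of $F$ and $Y$ from local triviality, and handle the smooth case by Cao--H\"oring. No gaps.
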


\begin{proof}
The existence of $\pi$ and $f$ is by \cite{matsumura-wang}*{Theorem 1.1} and the last statement is by \cite{cao-horing}*{Theorem 1.3}.

Here note that by \cite{matsumura-wang}*{Theorem 1.1}, $\pi$ is \'etale in codimension $1$ and $Y, F$ are klt. So we shall explain why $\pi$ is \'etale in codimension $2$ and $Y, F$ are terminal in our setting. 

Since $X$ is terminal, the singular locus $\Sing X$ is of codimension at least 3 by \cite{kollar-mori}*{Corollary~5.18}.
Since $\pi$ is \'etale in codimension $1$,
$\pi$ is actually \'etale over the smooth locus $X\setminus \Sing X$ by the purity of ramification locus
(Zariski's purity theorem). Therefore, 
$\pi\colon X'\to X$ is \'etale in codimension $2$.

Since $X$ is terminal, $X'$ is also terminal by \cite{kollar-mori}*{Proposition~5.20};
then $Y, F$ are also terminal since $f$ is locally constant.
\end{proof}

\begin{lem}\label{lem:rc=0}
Let $F$ be a rationally connected projective variety with rational singularities. Then $H^j(F, \mathcal O_F)=0$ for $j\geq 1$. In particular, $\chi(\mathcal{O}_F)=1$.
\end{lem}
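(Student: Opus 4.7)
The plan is to reduce the cohomology computation on $F$ to a smooth model via the rational singularities assumption, and then invoke the standard vanishing for smooth rationally connected varieties.

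First, I would take a resolution of singularities $\pi\colon \widetilde F \to F$. Since rational connectedness is a birational invariant among smooth projective varieties, and indeed the standard way to define rational connectedness for singular $F$ is to require that some (equivalently any) smooth projective birational model be rationally connected, $\widetilde F$ is a smooth projective rationally connected variety. I would then invoke the classical vanishing theorem of Campana and Koll\'ar--Miyaoka--Mori: for any smooth projective rationally connected variety $Y$, one has $H^j(Y, \mathcal O_Y) = 0$ for all $j \geq 1$.

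Next, I would exploit the hypothesis that $F$ has rational singularities, which means precisely that $\pi_* \mathcal O_{\widetilde F} = \mathcal O_F$ and $R^i \pi_* \mathcal O_{\widetilde F} = 0$ for $i \geq 1$. The Leray spectral sequence
\[
E_2^{p,q} = H^p(F, R^q\pi_* \mathcal O_{\widetilde F}) \Rightarrow H^{p+q}(\widetilde F, \mathcal O_{\widetilde F})
\]
then degenerates to give a canonical isomorphism $H^j(F, \mathcal O_F) \cong H^j(\widetilde F, \mathcal O_{\widetilde F})$ for every $j$. Combined with the vanishing above, this yields $H^j(F, \mathcal O_F) = 0$ for $j \geq 1$. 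The ``in particular'' statement is then immediate: since $F$ is projective and connected (a rationally connected variety is connected because a general pair of points lies on a rational curve), $h^0(F, \mathcal O_F) = 1$, and hence $\chi(\mathcal O_F) = \sum_j (-1)^j h^j(F, \mathcal O_F) = 1$.

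There is essentially no hard step here; the lemma is a direct assembly of standard inputs. The only point that deserves a sentence of care is the justification that $\widetilde F$ inherits rational connectedness from $F$, which I would handle by referring to the definition of rational connectedness in the singular case (or equivalently to its birational invariance for smooth projective varieties).
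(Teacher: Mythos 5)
Your proof is correct and follows exactly the same route as the paper: pass to a resolution, use the rational singularities hypothesis and the Leray spectral sequence to identify $H^j(F,\mathcal O_F)$ with $H^j(\widetilde F,\mathcal O_{\widetilde F})$, and invoke the vanishing for smooth projective rationally connected varieties (the paper cites \cite{kollar-book}*{Corollary~IV.3.8} for this). You merely spell out the spectral sequence step that the paper leaves implicit.
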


\begin{proof}
As $F$ has only rational singularities, it suffices to show that for a resolution $F'$ of $F$, $H^j(F', \mathcal O_{F'})=0$ for $j\geq 1$. Note that $F'$ is again rationally connected, so the conclusion follows from
\cite{kollar-book}*{Corollary~IV.3.8}.
\end{proof}

\begin{prop}\label{prop:fib hi=hi}
Let $f\colon X'\to Y$ be the fibration in Theorem~\ref{thm.structure}. Then $h^i(X',\mathcal O_{X'})=h^i(Y,\mathcal O_Y)$ 
for each $i\geq 0$. In particular, $\chi(\mathcal O_{X'})=\chi(\mathcal O_Y)$.
\end{prop}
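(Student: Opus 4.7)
The plan is to apply the Leray spectral sequence
\[
E_2^{p,q}=H^p(Y,R^qf_*\mathcal{O}_{X'})\Longrightarrow H^{p+q}(X',\mathcal{O}_{X'})
\]
and reduce the statement to showing that the higher direct images $R^qf_*\mathcal{O}_{X'}$ vanish for $q\geq 1$ and that $R^0f_*\mathcal{O}_{X'}=\mathcal{O}_Y$. Once these two identifications are in hand, the spectral sequence degenerates at $E_2$ along the row $q=0$, immediately giving $H^i(X',\mathcal{O}_{X'})\simeq H^i(Y,\mathcal{O}_Y)$ for every $i\geq 0$, and the equality of Euler characteristics follows by taking the alternating sum.

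To compute the higher direct images I would exploit the hypothesis that $f$ is locally constant, which by the definition recalled just above means that $f$ is a locally trivial fiber bundle with fiber $F$ in the analytic topology. Hence for any sufficiently small contractible analytic open $U\subset Y$ there is an identification $f^{-1}(U)\simeq U\times F$ compatible with $f$, and base change (or the K\"unneth formula) yields
\[
R^qf_*\mathcal{O}_{X'}|_U \;\simeq\; H^q(F,\mathcal{O}_F)\otimes_{\mathbb{C}}\mathcal{O}_U.
\]
By Theorem~\ref{thm.structure}, the fiber $F$ is a rationally connected projective variety with terminal, and in particular rational, singularities, so Lemma~\ref{lem:rc=0} furnishes $H^q(F,\mathcal{O}_F)=0$ for $q\geq 1$ together with $H^0(F,\mathcal{O}_F)=\mathbb{C}$. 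Plugging these values into the local model produces $R^qf_*\mathcal{O}_{X'}=0$ for $q\geq 1$ and $R^0f_*\mathcal{O}_{X'}=\mathcal{O}_Y$, as required.

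The argument is essentially formal once the structural information of Theorem~\ref{thm.structure} is invoked, and I do not anticipate a serious obstacle; the only non-cosmetic input is the vanishing $H^q(F,\mathcal{O}_F)=0$ for $q\geq 1$, and that has already been isolated in Lemma~\ref{lem:rc=0}. One small technical point to be careful about is that local triviality is only available in the analytic category, so the computation of $R^qf_*\mathcal{O}_{X'}$ should be performed analytically (or, equivalently, via cohomology and base change applied to the flat proper morphism $f$); Serre's GAGA then transports the vanishing back to the algebraic setting without loss.
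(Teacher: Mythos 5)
Your proposal is correct and follows essentially the same route as the paper: both use the Leray spectral sequence together with the vanishing $R^qf_*\mathcal{O}_{X'}=0$ for $q\geq 1$, deduced from the local triviality of the locally constant fibration and from Lemma~\ref{lem:rc=0} applied to the rationally connected terminal fiber. The extra care you take about working analytically and invoking GAGA is a reasonable elaboration of the paper's one-line justification, not a different argument.
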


\begin{proof}
Since any fiber $F$ of $f$ is rationally connected and terminal,
 $H^j(F, \mathcal O_F)=0$ for $j\geq 1$ by Lemma~\ref{lem:rc=0}. In particular,
$R^jf_*\mathcal O_{X'}=0$ for $j\geq 1$ as the fibration is locally constant. Then $h^i(X',\mathcal O_{X'})=h^i(Y,\mathcal O_Y)$ by the Leray spectral sequence. 
\end{proof}

\begin{lem}\label{lem:q>0 smooth}
Let $X$ be a terminal projective $3$-fold such that $-K_X$ is nef. If the irregularity $q(X):=h^1(X,\mathcal O_X)>0$, then $X$ is smooth. 
\end{lem}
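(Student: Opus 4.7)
The plan is to apply Theorem~\ref{thm.structure} to produce a finite cover $\pi\colon X'\to X$ étale in codimension $2$ and a locally constant MRC fibration $f\colon X'\to Y$ with $Y$ terminal and $K_Y\sim_{\mathbb Q}0$. Since $\pi$ is finite between normal varieties, the trace map splits $\mathcal O_X$ off $\pi_*\mathcal O_{X'}$, so $q(X')\ge q(X)>0$; Proposition~\ref{prop:fib hi=hi} then gives $q(Y)=q(X')>0$, hence $\dim Y\ge 1$. The next step is a case analysis on $\dim Y$.

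If $\dim Y=1$, then $Y$ is an elliptic curve and the fibers of $f$ are rationally connected terminal surfaces, hence smooth rational surfaces, so $X'$ is a smooth locally trivial bundle. If $\dim Y=2$, then $Y$ is a terminal (hence smooth) projective surface with $K_Y\sim_{\mathbb Q}0$ and $q(Y)>0$, so $Y$ is abelian or bi-elliptic and the fibers are $\mathbb P^1$; again $X'$ is smooth. If $\dim Y=3$, then the birational locally constant $f$ is an isomorphism, so $X'=Y$ has $K_{X'}\sim_{\mathbb Q}0$ and $q(X')>0$; applying the Beauville--Bogomolov decomposition in the klt setting yields a quasi-étale cover $\widetilde X\to X'$ with $\widetilde X$ either an abelian $3$-fold or $E\times S$ for an elliptic $E$ and a K3 surface $S$, and in both subcases $\widetilde X$ is smooth.

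In every case $X$ has a smooth cover; replacing it by its Galois closure over $X$ gives a Galois cover $\widetilde X\to X$ with group $G$. To conclude $X$ is smooth, one shows $G$ acts freely on $\widetilde X$. Fix $\tilde p\in\widetilde X$ with stabilizer $G_{\tilde p}$. Terminality of $X$ forces each nontrivial $g\in G_{\tilde p}$ to fix only $\tilde p$ locally, so $g$ has no eigenvalue $1$ on $T_{\tilde p}\widetilde X$. On the other hand, $q(X)=\dim H^1(\widetilde X,\mathcal O_{\widetilde X})^G>0$ provides a nonzero $G$-invariant cohomology class. In the abelian case, the $G$-representation on $H^1(\widetilde X,\mathcal O_{\widetilde X})$ is identified (via translation-invariance) with the tangent representation at $\tilde p$, so the presence of eigenvalue $1$ on $H^1$ contradicts its absence on $T_{\tilde p}$ unless $G_{\tilde p}$ is trivial. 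In the fibration cases, $q>0$ forces each $g\in G$ to act on the base $Y$ as a translation (a non-translation automorphism of an elliptic curve, abelian surface, or bi-elliptic surface acts non-trivially on $H^1(\mathcal O_Y)$); a nontrivial translation has no fixed point, while an element acting trivially on $Y$ would have to fix a positive-dimensional subvariety of $\widetilde X$ (because the simply connected rational fiber admits no free nontrivial automorphism), contradicting the quasi-étaleness of $\widetilde X\to X$. In every scenario, $G_{\tilde p}$ is trivial, and $X$ is smooth.

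The technical heart is the freeness argument in the final step: étaleness in codimension $2$ does not by itself imply smoothness of $X$, and one must combine the terminality constraint (nontrivial stabilizers have no eigenvalue $1$) with the condition $q>0$ (every $g\in G$ must preserve a nonzero cohomology class) to derive the needed contradiction in each geometric setting.
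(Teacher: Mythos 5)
Your proposal takes a completely different and much longer route than the paper. The paper's proof is two sentences: since $q(X)>0$, by Wang's theorem (or \cite{matsumura-wang}*{Corollary 4.9}) the Albanese map of $X$ is a surjective locally constant fibration; its fibers are terminal of dimension at most $2$, hence smooth, and a locally trivial fibration with smooth fibers over an abelian variety has smooth total space. No covers, no group actions, no case analysis. Your strategy --- pass to a quasi-\'etale Galois cover $\widetilde X\to X$ with smooth $\widetilde X$ and then prove the Galois action is free --- can in principle be made to work (and you correctly identify that freeness is the crux, since \'etaleness in codimension $2$ alone does not give smoothness of the quotient, as the paper's Examples 6.4 and 7.2 show), but as written the freeness step has a genuine gap.

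The flawed inference is: ``$q>0$ forces each $g\in G$ to act on the base $Y$ as a translation (a non-translation automorphism \dots acts non-trivially on $H^1(\mathcal O_Y)$).'' What $q(X)=\dim H^1(\mathcal O_{\widetilde X})^G>0$ actually gives is a single common eigenvector with eigenvalue $1$ in $H^1(\mathcal O_Y)$; when $Y$ is an abelian surface this space is $2$-dimensional, so an element $g$ may act non-trivially on $H^1(\mathcal O_Y)$ and still fix that vector. Concretely, $g$ can act on $Y=E_1\times E_2$ by $(z_1,z_2)\mapsto(z_1+t,-z_2)$: it is neither a translation nor trivial on $Y$, yet preserves the class coming from $E_1$. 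Such a ``quasi-translation'' $z\mapsto\alpha(z)+t$ with $\alpha\neq\mathrm{id}$ but $1$ an eigenvalue of $\alpha$ can have fixed locus on $Y$ equal to a translate of an elliptic curve; your dichotomy (nontrivial translation versus trivial on $Y$) simply does not see this case. The conclusion is still reachable --- over such a fixed curve $g$ acts fiberwise on the $\mathbb P^1$-bundle and every automorphism of $\mathbb P^1$ has a fixed point, so $\mathrm{Fix}(g)\subset\widetilde X$ contains a curve, contradicting \'etaleness in codimension $2$ --- but that extra argument is missing and is exactly the content needed. A smaller imprecision: the parenthetical ``the simply connected rational fiber admits no free nontrivial automorphism'' does not apply when $\widetilde X=E\times S$ with $S$ a K3 surface (K3 surfaces do admit free involutions); there the correct conclusion is that no contradiction is needed because such a $g$ is automatically fixed-point free. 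Given that the Albanese-map argument disposes of the lemma immediately, I would recommend that route.
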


\begin{proof}
By \cite{wang}*{Theorem A} or \cite{matsumura-wang}*{Corollary 4.9}, the Albanese map of $X$ is locally constant and surjective. Any fiber of the Albanese map of $X$ has dimension at most $2$ and has terminal singularities, so it is smooth, which implies that $X$ is smooth.
\end{proof}

If a normal variety $X$ is smooth in codimension $k$, then by extending from the smooth locus, the Chern classes $c_i(X)$ can be defined as elements sitting in the Chow groups $A^i(X)$ for $i\leq k$. In particular, $c_1(X)$ and $c_2(X)$ are well-defined for terminal varieties. 
The following lemma for Chern classes is easy but useful.

\begin{lem}\label{lem.chern}
Let $X$ be a normal projective $\mathbb{Q}$-Gorenstein variety of dimension $n$ which is smooth in codimension $2$. Let $\pi\colon X'\to X$ be a finite morphism which is
 \'etale in codimension $2$, then $c_i(X')=\pi^*c_i(X)$ for $i=1, 2$.
In particular, $c_1(X')^{n-2} \cdot c_2(X')=\deg \pi \cdot
c_1(X)^{n-2} \cdot c_2(X)$.
\end{lem}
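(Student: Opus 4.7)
The plan is to reduce to a large smooth open set on which both $X$ and $X'$ have their classical Chern classes and $\pi$ restricts to an étale morphism, and then extend using the fact that restriction to an open subset with small complement is an isomorphism on low-codimension Chow groups.

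First I would let $V\subset X$ be the intersection of the smooth locus of $X$ with the étale locus of $\pi$. Both $\Sing(X)$ (by hypothesis) and the branch locus of $\pi$ have codimension $\geq 3$ in $X$, so $X\setminus V$ has codimension $\geq 3$ in $X$; setting $V':=\pi^{-1}(V)$, étale morphisms preserve smoothness, hence $V'$ is smooth and $X'\setminus V'$ has codimension $\geq 3$ in $X'$. The restricted morphism $\pi|_{V'}\colon V'\to V$ is finite étale between smooth varieties, so $T_{V'}\cong (\pi|_{V'})^*T_V$ and therefore $c_i(V')=(\pi|_{V'})^*c_i(V)$ for every $i\geq 0$, and in particular for $i=1,2$.

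Next I would appeal to the excision exact sequence for Chow groups: for a closed subset $Z\subset X$ of codimension strictly greater than $i$, the restriction $A^i(X)\to A^i(X\setminus Z)$ is an isomorphism, since the kernel would be generated by cycles of codimension $i$ supported in $Z$, of which there are none for dimension reasons. Applying this for $i=1,2$ with $Z=X\setminus V$ (respectively $X'\setminus V'$), we obtain $A^i(X)\cong A^i(V)$ and $A^i(X')\cong A^i(V')$ for $i=1,2$. Since $c_i(X)$ and $c_i(X')$ are defined precisely as the unique classes restricting to $c_i(V)$ and $c_i(V')$, and since (flat) pullback commutes with restriction to open subsets, the identity on $V'$ propagates to $c_i(X')=\pi^*c_i(X)$ for $i=1,2$.

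For the numerical consequence, combining these identities gives $c_1(X')^{n-2}\cdot c_2(X')=\pi^*(c_1(X)^{n-2}\cdot c_2(X))$ as zero-cycle classes on $X'$, and the projection formula $\pi_*\pi^*\alpha=(\deg\pi)\,\alpha$ on zero cycles then yields $c_1(X')^{n-2}\cdot c_2(X')=\deg\pi\cdot c_1(X)^{n-2}\cdot c_2(X)$ after taking degrees. There is no real obstacle in this argument: the codimension hypothesis ``smooth in codimension $2$'' together with ``étale in codimension $2$'' is exactly calibrated to let the excision isomorphism $A^i(X)\cong A^i(V)$ work for both $i=1$ and $i=2$, after which the smooth-case identity transports automatically.
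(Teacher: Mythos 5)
Your argument is correct and is exactly the one the paper has in mind: the authors state this lemma without proof (calling it ``easy but useful''), having just defined $c_1$ and $c_2$ by extension from the smooth locus, which is precisely your excision isomorphism $A^i(X)\cong A^i(V)$ for $i\leq 2$ combined with the tangent-bundle identity over the \'etale locus. The only point worth making explicit is that $\pi|_{V'}$ is genuinely \'etale (i.e.\ flat, not just unramified) because a finite unramified morphism from a variety with no embedded components onto a normal variety is \'etale; this is standard and does not affect the correctness of your proof.
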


\section{Reid's formula and holomorphic Euler characteristics}\label{sec:hec}

A {\it basket} $B$ is a collection of pairs of coprime integers (permitting weights). 
Let $X$ be a terminal projective $3$-fold. According to Reid~\cite{reid}, 
there is a basket of virtual orbifold points
\[
B_{X}=\{(b_{i}, r_{i}) \mid i=1, \cdots, s ; 0<b_{i}\leq\frac{r_{i}}{2} ; b_{i} \text{ is coprime to } r_{i}\}
\]
associated to $X$, where a pair $\left(b_{i}, r_{i}\right)$ corresponds to an orbifold point $Q_{i}$ of type $\frac{1}{r_{i}}\left(1, -1, b_{i}\right)$. 
Here recall that every singularity of $X$, as a $3$-dimensional terminal singularity, can be locally deformed into a set of cyclic quotient singularities, and these cyclic quotient singularities are called {\it virtual orbifold points} of $X$.
Denote by $\mathcal{R}_X$ the collection of $r_i$ (permitting weights) appearing in $B_X$ or as a set of integers with weights where weights appear in superscripts, say for example, 
\[
\mathcal{R}_X=\{3, 3,7,11\}=\{ 3^2, 7,11\}.
\]
Note that the Cartier index $r_X$ of $K_X$ is just $\lcm\{r_i\mid r_i\in \mathcal{R}_X\}$.

According to Reid~\cite{reid}*{10.3}, 
for any positive integer $n$, 
\begin{align}\label{eq:Reid-RR}
\chi(-nK_X)&=\frac{1}{12} n(n+1)(2 n+1)\left(-K_{X}^{3}\right)+(2 n+1)\chi(\mathcal{O}_X)-l(n+1) 
\end{align}
where $l(n+1)=\sum_{i} \sum_{j=1}^{n} \frac{\overline{j b_{i}}\left(r_{i}-\overline{j b_{i}}\right)}{2 r_{i}}$ and the first sum runs over $B_X$. Here $\overline{jb_i}$ means the smallest non-negative residue of $jb_i \bmod r_i$.


In this case, $\chi(\mathcal O_X)$ is related to $c_1(X)\cdot c_2(X)$ and the singularities
 as follows:

\begin{thm}[\cites{kawamata, reid}]\label{thm.euler}
Let X be a terminal projective $3$-fold. Then
\begin{equation}\label{eq.range}
\chi(\mathcal O_X)=\frac{1}{24}c_1(X)\cdot c_2(X)+\frac{1}{24}\sum_{r_i\in \mathcal{R}_X} (r_i-\frac{1}{r_i}).
\end{equation}
\end{thm}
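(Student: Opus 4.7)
The plan is to reduce the problem to the case where $X$ has only cyclic quotient singularities and then apply orbifold Riemann--Roch.

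\textbf{Step 1 (Reduction to cyclic quotients).} By Reid's $\mathbb{Q}$-Gorenstein smoothing theorem for $3$-dimensional terminal singularities, each isolated terminal singularity $(P,X)$ admits a $\mathbb{Q}$-Gorenstein $1$-parameter deformation whose general fibre has only cyclic quotient singularities of the prescribed basket types $\tfrac{1}{r_i}(1,-1,b_i)$. Globalising, I would $\mathbb{Q}$-Gorenstein deform $X$ to a terminal projective $3$-fold $X^{\circ}$ whose singular locus consists exactly of the virtual orbifold points recorded in $\mathcal{R}_X$. Both sides of the claimed formula are constant in such a family: $\chi(\mathcal{O})$ by flatness, $c_1\cdot c_2$ by constancy of characteristic numbers in flat families (with $c_i$ defined via extension from the smooth locus, which has complement of codimension $\geq 3$ by terminality), and the basket $\mathcal{R}_X$ tautologically. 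Hence it suffices to prove the identity for $X^{\circ}$.

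\textbf{Step 2 (Orbifold Riemann--Roch).} I would regard $X^{\circ}$ as a smooth Deligne--Mumford stack $\mathcal{X}^{\circ}$ with the natural orbifold structure at each cyclic quotient point, so that $\chi(\mathcal{O}_{X^{\circ}})=\chi(\mathcal{O}_{\mathcal{X}^{\circ}})$. Kawasaki's orbifold Riemann--Roch then expresses $\chi(\mathcal{O}_{\mathcal{X}^{\circ}})$ as a sum over components of the inertia stack. The untwisted sector integrates the ordinary Todd class and yields the smooth contribution $\tfrac{1}{24}\,c_1(X^{\circ})\cdot c_2(X^{\circ})$. Each twisted sector, localised at a cyclic quotient point of type $\tfrac{1}{r}(1,-1,b)$, contributes the character sum
\[
\frac{1}{r}\sum_{\substack{\zeta^{r}=1 \\ \zeta\neq 1}}\frac{1}{(1-\zeta)(1-\zeta^{-1})(1-\zeta^{b})}.
\]

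\textbf{Step 3 (Character sum identity and conclusion).} I would finally establish the identity
\[
\frac{1}{r}\sum_{\substack{\zeta^{r}=1 \\ \zeta\neq 1}}\frac{1}{(1-\zeta)(1-\zeta^{-1})(1-\zeta^{b})}=\frac{r^{2}-1}{24\,r}=\frac{1}{24}\Bigl(r-\frac{1}{r}\Bigr),
\]
valid for every $b$ coprime to $r$. Summing these local contributions over all $r_{i}\in\mathcal{R}_{X}$ then yields the stated formula.

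The hard part will be Step 3, in particular the striking independence of the local contribution from the parameter $b$. It can be settled either by a direct evaluation via Dedekind-type sum identities (expanding $(1-\zeta^{b})^{-1}$ as a geometric series and reorganising the double sum), or, more elegantly, by circumventing it entirely: compare the orbifold HRR expression for $\chi(-nK_{X^{\circ}})$ with Reid's formula \eqref{eq:Reid-RR} for all $n\geq 0$; matching the two polynomial expressions in $n$ forces the $n=0$ coefficient to coincide with $\tfrac{1}{24}(c_{1}\cdot c_{2}+\sum_{i}(r_{i}-1/r_{i}))$, so the sought formula drops out without explicit character-sum manipulation.
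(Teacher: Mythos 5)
The paper offers no proof of Theorem~\ref{thm.euler}: it is imported from \cite{kawamata} and \cite{reid}*{10.3}, and your proposal is essentially a reconstruction of the argument in those sources — reduce to the basket of cyclic quotient points and apply equivariant/orbifold Riemann--Roch, the whole content being the local identity of Step 3. That identity is correct (the $b$-independence follows from the symmetry $\zeta\mapsto\zeta^{-1}$ of the weight pair $(1,-1)$ together with the fact that the sum ranges over all nontrivial $\zeta$); for instance both sides equal $1/16$ for $r=2$ and $5/32$ for $r=4$. So Steps 2 and 3 are sound and agree with the classical proof.

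The genuine weak point is Step 1. First, it is not known that the local $\mathbb{Q}$-Gorenstein deformations of the individual terminal germs glue to a global deformation of the projective $3$-fold $X$; Reid is explicit about this in \cite{reid}, and the basket is introduced precisely as a bookkeeping device that does not presuppose a global smoothing. Second, even granting a global family, your assertion that $c_1(X)\cdot c_2(X)$ (with $c_2$ defined by extension from the smooth locus) is constant in a flat family of \emph{singular} $3$-folds is not a formal consequence of flatness, and the natural way to prove it — via Riemann--Roch together with the deformation invariance of $\chi$ — is circular in this context. The standard repair, and what \cite{reid} and \cite{kawamata} actually do, is to localise: write $\chi(\mathcal{O}_X)-\frac{1}{24}c_1(X)\cdot c_2(X)$ as a sum of correction terms that are \emph{local analytic invariants} of the germs (defined by comparison on a resolution), show each such invariant is constant under the local $\mathbb{Q}$-Gorenstein deformation of that germ alone, and only then evaluate it on cyclic quotients by your Steps 2--3. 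Relatedly, the shortcut you suggest at the end of Step 3 — extracting the $n=0$ coefficient by matching against \eqref{eq:Reid-RR} — saves nothing: \eqref{eq:Reid-RR} has constant term $\chi(\mathcal{O}_X)$ itself (since $l(1)=0$), so the comparison returns a tautology unless one has already evaluated the untwisted and twisted sector contributions at $n=0$, i.e.\ carried out Step 3 anyway.
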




As a corollary, we get the finiteness of singularities of terminal $3$-folds with nef anti-canonical divisors.
\begin{cor}\label{cor.bound}
Let $X$ be a terminal projective $3$-fold $X$ such that $-K_X$ is nef and $K_X\not \equiv 0$. Then the following hold:
\begin{enumerate}
 \item $ \chi(\mathcal O_X)\in \{0,1,2\}$. Moreover, $ \chi(\mathcal O_X)\neq 0$ if $X$ is singular.
 
 \item There are only finitely many possibilities for $\mathcal{R}_X$, $B_X$, and $c_1(X)\cdot c_2(X)$.
\end{enumerate}
\end{cor}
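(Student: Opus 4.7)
The plan is to combine Reid's formula (Theorem~\ref{thm.euler}), the structure theorem (Theorem~\ref{thm.structure}), and Ou's pseudo-effectivity $c_1(X)\cdot c_2(X)\geq 0$ for terminal $3$-folds with nef anti-canonical divisor. For the lower bound in Part~(1), Reid's formula $24\chi(\mathcal O_X) = c_1(X)\cdot c_2(X) + \sum_{r_i\in\mathcal R_X}(r_i - 1/r_i)$ is a sum of nonnegative terms, so $\chi(\mathcal O_X)\geq 0$. If $X$ is singular, then $\mathcal R_X$ contains at least one $r_i\geq 2$ contributing $r_i - 1/r_i\geq 3/2$; hence $\chi(\mathcal O_X)\geq 1/16$, and being an integer, $\chi(\mathcal O_X)\geq 1$.

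For the upper bound $\chi(\mathcal O_X)\leq 2$, I apply Theorem~\ref{thm.structure} to obtain a cover $\pi\colon X'\to X$ \'etale in codimension $2$ together with a locally constant MRC fibration $f\colon X'\to Y$. Since $\pi$ is \'etale in codimension $1$, $K_{X'} = \pi^* K_X\not\equiv 0$, so $\dim Y\leq 2$. Enumerating $Y$ (a point, an elliptic curve, or a smooth surface with $K_Y\equiv 0$, i.e.\ abelian, K3, Enriques, or bi-elliptic) yields $\chi(\mathcal O_Y)\in\{0,1,2\}$ and $h^2(Y,\mathcal O_Y)\leq 1$; by Proposition~\ref{prop:fib hi=hi}, the same holds for $X'$. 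When $X$ is smooth, I take $\pi=\mathrm{id}$ and conclude directly. When $X$ is singular, Lemma~\ref{lem:q>0 smooth} gives $q(X)=0$, and Serre duality combined with the non-effectiveness of $K_X$ (since $-K_X$ nef and $K_X$ effective would force $K_X\equiv 0$) gives $h^3(X,\mathcal O_X)=0$. So $\chi(\mathcal O_X) = 1 + h^2(X,\mathcal O_X)$. The splitting $\pi_*\mathcal O_{X'}\cong \mathcal O_X\oplus \mathcal F$ via the normalized trace $\frac{1}{\deg\pi}\mathrm{Tr}$ (valid in characteristic zero for any finite surjective morphism between normal varieties) then yields $h^i(X,\mathcal O_X)\leq h^i(X',\mathcal O_{X'})$ for every $i$; in particular $h^2(X,\mathcal O_X)\leq 1$, giving $\chi(\mathcal O_X)\leq 2$.

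Part~(2) is then a formal consequence. Combining $\chi\leq 2$ with Reid's formula gives $c_1(X)\cdot c_2(X) + \sum(r_i - 1/r_i)\leq 48$, so the bounds $3/2\leq r_i - 1/r_i\leq 48$ yield at most $32$ virtual orbifold points each with $r_i\leq 49$, hence finitely many possibilities for $\mathcal R_X$. For each $\mathcal R_X$, the coprime weights $0<b_i\leq r_i/2$ give finitely many baskets $B_X$, and $c_1(X)\cdot c_2(X) = 24\chi(\mathcal O_X) - \sum(r_i - 1/r_i)$ lies in a finite set because $\chi\in\{0,1,2\}$.

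The step I expect to be the main obstacle is the singular case of the upper bound. The trace-splitting comparison $h^i(X,\mathcal O_X)\leq h^i(X',\mathcal O_{X'})$ is standard in characteristic zero via normality of $X$ and the identity $\frac{1}{\deg\pi}\mathrm{Tr}\circ(\mathcal O_X\hookrightarrow\pi_*\mathcal O_{X'}) = \mathrm{id}$, but one must verify this transfers cleanly despite $\pi$ being only \'etale in codimension two (and possibly non-Galois); fortunately the splitting argument does not actually require Galois-ness, only finiteness together with invertibility of $\deg\pi$.
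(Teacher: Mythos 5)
Your proof follows essentially the same route as the paper's: the structure theorem plus Proposition~\ref{prop:fib hi=hi} to reduce $\chi(\mathcal O_{X'})$ to $\chi(\mathcal O_Y)$, Lemma~\ref{lem:q>0 smooth} to force $q(X)=0$ in the singular case, the identity $\chi(\mathcal O_X)=1+h^2(X,\mathcal O_X)$ together with $h^2(X,\mathcal O_X)\le h^2(X',\mathcal O_{X'})\le 1$, and Reid's formula plus Ou's pseudo-effectivity for part (2). Your worry about the trace splitting is unfounded for exactly the reason you give: it needs only finiteness and normality in characteristic zero, and the paper invokes the same inequality $h^2(X,\mathcal O_X)\le h^2(X',\mathcal O_{X'})$ without further comment.

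One step is genuinely wrong, though it happens to be redundant. You claim that if $X$ is singular then $\mathcal R_X$ contains some $r_i\ge 2$. This is false: Reid's basket records only the non-Gorenstein content of the singularities. A Gorenstein terminal singular point (an isolated cDV point, e.g.\ an ordinary double point) admits a smoothing and contributes nothing to $B_X$, so a singular $X$ whose singularities are all Gorenstein has $\mathcal R_X=\emptyset$, and your estimate $\chi(\mathcal O_X)\ge 1/16$ collapses to $\chi(\mathcal O_X)\ge 0$. The correct argument for $\chi(\mathcal O_X)\ne 0$ in the singular case is the one you give later anyway (and the one the paper uses): $q(X)=0$ by Lemma~\ref{lem:q>0 smooth}, $h^3(X,\mathcal O_X)=0$ since $K_X\not\equiv 0$, hence $\chi(\mathcal O_X)=1+h^2(X,\mathcal O_X)\ge 1$. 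With the basket-based justification deleted, your proof is correct.
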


\begin{proof}
(1) We may assume that $X$ is not rationally connected by Lemma~\ref{lem:rc=0}.

Let $f\colon X'\to Y$ be the fibration in Theorem~\ref{thm.structure}. Then $\dim Y\in \{1, 2\}$ as $K_X\not\equiv 0$ and $X$ is not rationally connected. Moreover, $Y$ is smooth and $K_Y\sim_\mathbb{Q} 0$. Hence $\chi(\mathcal{O}_{X'})=\chi(\mathcal{O}_Y)\in \{0,1,2\}$ by Proposition~\ref{prop:fib hi=hi} and the classical classification of curves and surfaces. 

Let $q(X):=h^1(X,\mathcal O_X)$ be the irregularity of $X$. If $q(X)>0$, then $X$ is smooth by Lemma~\ref{lem:q>0 smooth}. Hence
$\chi(\mathcal{O}_{X})=\chi(\mathcal{O}_{X'})\in \{0,1,2\}$ by Theorem~\ref{thm.structure}.

If $q(X)=0$ (which contains the case that $X$ is singular), then as $X'\to X$ is finite, $$h^2(X, \mathcal{O}_X)\leq h^2(X', \mathcal{O}_{X'})=h^2(Y, \mathcal{O}_{Y})\leq 1$$ by Proposition~\ref{prop:fib hi=hi} and the classical classification of curves and surfaces. Also $h^3(X, \mathcal{O}_X)=0$ as $K_X\not\equiv 0$. 
Hence $$\chi(\mathcal{O}_X)=1+h^2(X, \mathcal{O}_X)\in \{1, 2\}.$$

(2) By \cite{ou}*{Corollary 1.5}, $c_1(X)\cdot c_2(X)\geq 0$. Hence by Theorem~\ref{thm.euler}, as 
\begin{align}
 \sum_{r_i\in \mathcal{R}_X} (r_i-\frac{1}{r_i})\leq 24\chi(\mathcal O_X)\leq 48, \label{eq:ri<chi}
\end{align}
there are only finitely many possibilities for $\mathcal{R}_X$ and hence also for $B_X$.
Therefore, by Theorem~\ref{thm.euler} again, there are only finitely many possibilities for $c_1(X)\cdot c_2(X)$. 
\end{proof}

If $-K_X$ is nef but not big, then there is a strong restriction on $B_X$.
\begin{lem}\label{lem.lint}
Let $X$ be a terminal projective $3$-fold such that 
$-K_X$ is nef but not big. Then $l(n+1)$ is an integer for any positive integer $n$. In particular, $l(2)=\sum_{i} \frac{ b_{i}(r_{i}-b_{i})}{2 r_{i}}$ is an integer.
\end{lem}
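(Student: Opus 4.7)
The plan is to exploit Reid's Riemann--Roch formula \eqref{eq:Reid-RR} together with the integrality of holomorphic Euler characteristics. The only nontrivial input is the vanishing of the top self-intersection.

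First I would observe that since $-K_X$ is nef on a projective $3$-fold, its top self-intersection coincides with its volume by a theorem of Kleiman (or Fujita). In particular, nefness together with the assumption that $-K_X$ is not big gives $(-K_X)^{3}=\Vol(-K_X)=0$.

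Substituting $(-K_X)^3=0$ into Reid's formula \eqref{eq:Reid-RR}, the cubic term in $n$ disappears and we obtain
\[
\chi(\mathcal{O}_X(-nK_X))=(2n+1)\chi(\mathcal{O}_X)-l(n+1)
\]
for every positive integer $n$. The sheaf $\mathcal{O}_X(-nK_X)$ is a coherent (reflexive) sheaf on the projective variety $X$, so its holomorphic Euler characteristic is an integer; and of course $(2n+1)\chi(\mathcal O_X)\in\mathbb Z$. Solving for $l(n+1)$ shows it must also be an integer, which is the first assertion.

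For the second assertion, I would unwind the definition at $n=1$: the inner sum reduces to the single term $j=1$, giving $l(2)=\sum_i \frac{\overline{b_i}(r_i-\overline{b_i})}{2r_i}$, and since $0<b_i<r_i$ we have $\overline{b_i}=b_i$, yielding $l(2)=\sum_{i}\frac{b_i(r_i-b_i)}{2r_i}$ as claimed. There is no serious obstacle here; the only thing to be careful about is justifying $(-K_X)^3=0$ from nef-but-not-big, which is standard.
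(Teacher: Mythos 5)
Your proposal is correct and follows exactly the paper's argument: the paper's proof is the one-line observation that since $(-K_X)^3=0$, formula \eqref{eq:Reid-RR} gives $l(n+1)=(2n+1)\chi(\mathcal O_X)-\chi(-nK_X)\in\mathbb Z$. You merely spell out the two routine points the paper leaves implicit, namely that nef-but-not-big forces $(-K_X)^3=\Vol(-K_X)=0$ and that $\overline{b_i}=b_i$ in the $n=1$ case.
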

\begin{proof}
Since $(-K_X)^3=0$, by \eqref{eq:Reid-RR}, $l(n+1)=(2n+1)\chi(\mathcal O_X)-\chi(-nK_X)$ is an integer. 
\end{proof}

\section{The case $\chi=1$}
In this section, we study the case when $X$ is a terminal projective $3$-fold such that $-K_X$ is nef and $\chi(\mathcal O_X)=1$. In particular, it contains the 
case that $X$ is rationally connected by Lemma~\ref{lem:rc=0}. 

\begin{thm}\label{thm.list}
Let $X$ be a terminal projective $3$-fold such that $-K_X$ is nef and $\chi(\mathcal O_X)=1$.
Then the following statements hold.
\begin{enumerate}
 \item Either $ c_1(X)\cdot c_2(X)\geq \frac{1}{252}$ or $c_1(X)\cdot c_2(X)=0$, where in the latter case, $\mathcal{R}_X$ is one of the following types:
{
\begin{longtable}{LLL}
\caption{$\mathcal{R}_X$ for $\chi=1$ and $c_1c_2=0$}\label{tab1}\\
\hline
\text{No.} & \mathcal{R}_X & r_X \\
\hline
\endfirsthead
\\
\hline 
\text{No.} & \mathcal{R}_X & r_X \\
\endhead
\hline
\endfoot

\hline \hline
\endlastfoot

1& \{ 5^5\} & 5\\
2& \{ 2^3, 4, 8^2\} & 8\\
3& \{ 2^3, 5^2,10\} & 10 \\
4& \{ 2^2, 3^2,4,12\} & 12 \\
5& \{ 2, 4^6\} & 4\\
6& \{ 3^4, 4^2,6\} & 12 \\
7& \{3^9 \} & 3\\
8& \{2^6,4^4\} & 4 \\
9& \{2^5,3^4,6\} & 6\\
10& \{2^{11},4^2\} & 4 \\
11& \{2^{16}\} & 2\\
\hline
\end{longtable}
}

\item If moreover $-K_X$ is not big, then $ c_1(X)\cdot c_2(X)\geq \frac{2}{5}$ or $c_1(X)\cdot c_2(X)=0$; and $\mathcal{R}_X$ is one of the following types:
{
\begin{longtable}{LLLL}
\caption{$\mathcal{R}_X$ for $\chi=1$ and $-K$ not big}\label{tab2}\\
\hline
\text{No.} & \mathcal{R}_X & r_X & c_1c_2 \\
\hline
\endfirsthead
\multicolumn{4}{l}{{ {\bf \tablename\ \thetable{}} \textrm{-- continued}}}
\\
\hline 
\text{No.} & \mathcal{R}_X & r_X & c_1c_2 \\
\endhead
\hline
\hline \multicolumn{4}{c}{{\textrm{Continued on next page}}} \\ \hline
\endfoot

\hline \hline
\endlastfoot
 1 & \{5^2\} & 5 & 72/5\\
 2 & \{2,3,6\} & 6 & 14 \\
 3 & \{2,4^2\} & 4 & 15 \\
 4 & \{3^3\} & 3 & 16 \\
 5 & \{7^3\} & 7 & 24/7 \\
 6 & \{2^4\} & 2 & 18\\
 7 & \{2^2,10^2\} & 10 & 6/5\\
 8 & \{2,4,8^2\} & 8 & 3\\
 9 & \{5^4\} & 5 & 24/5\\
 10 & \{2,3,5^2,6\} & 30 & 22/5\\
 11 & \{2,4^2,5^2\} & 20 & 27/5\\
 12 & \{3^3,5^2\} & 15 & 32/5\\
 13 & \{ 5^5\} & 5 & 0 \\
 14 & \{2^4,5^2\} & 10 & 42/5\\
 15 & \{ 2^3, 4, 8^2\} & 8 & 0 \\
 16 & \{ 2^3, 5^2,10\} & 10 & 0 \\
 17 & \{2^3,6^3\} & 6 & 2\\
 18 & \{ 2^2, 3^2,4,12\} & 12 & 0 \\
 19 & \{2^2,3^2,6^2\} & 6 & 4 \\
 20 & \{2^2,3,4^2,6\} & 12 & 5\\
 21 & \{2^2,4^4\} & 4 & 6\\
 22 & \{2,3^4,6\} & 6 & 6\\
 23 & \{2,3^3,4^2\} & 12 & 7\\
 24 & \{3^6\} & 3 & 8\\
 25 & \{2^5,3,6\} & 6 & 8 \\
 26 & \{2^5,4^2\} & 4 & 9\\
 27 & \{2^4,3^3\} & 6 & 10 \\
 28 & \{2^8\} & 2 & 12\\
 29 & \{2^4,3^3,5^2\} & 30 & 2/5\\
 30 & \{3^9 \} & 3& 0 \\
 31 & \{2^8,5^2\} & 10 & 12/5\\
 32 & \{2^6,4^4\} & 4 & 0 \\
 33 & \{2^5,3^4,6\} & 6 & 0\\
 34 & \{2^5,3^3,4^2\} & 12 & 1\\
 35 & \{2^4,3^6\} & 6 & 2 \\
 36 & \{2^9,3,6\} & 6 & 2\\
 37 & \{2^{9},4^2\} & 4& 3\\
 38 & \{2^8,3^3\} & 6 & 4\\
 39 & \{2^{12}\} & 2 & 6\\
 40 & \{2^{16}\} & 2 & 0\\
\hline
\end{longtable}
}
\end{enumerate}
\end{thm}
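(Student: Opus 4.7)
The plan is to reduce both statements to a finite combinatorial problem via Reid's formula and then enumerate. Applying Theorem~\ref{thm.euler} with $\chi(\mathcal{O}_X)=1$ yields
\[
c_1(X)\cdot c_2(X)=24-\sum_{r_i\in \mathcal{R}_X}\Bigl(r_i-\tfrac{1}{r_i}\Bigr),
\]
while the nonnegativity $c_1(X)\cdot c_2(X)\geq 0$ of \cite{ou}*{Corollary 1.5} forces $\sum_i(r_i-1/r_i)\leq 24$. Since $r-1/r\geq 3/2$ for every integer $r\geq 2$ and $r-1/r>24$ for $r\geq 25$, the multiset $\mathcal{R}_X$ has cardinality at most $16$ and every $r_i$ lies in $\{2,3,\dots,24\}$. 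Hence only finitely many multisets $\mathcal{R}_X$ must be considered.

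For part (1), I would enumerate every such $\mathcal{R}_X$, compute $c_1(X)\cdot c_2(X)=24-\sum(r_i-1/r_i)$, and read off both the minimum strictly positive value and the full list of multisets giving zero. Since the denominator of $c_1 c_2$ divides $r_X=\mathrm{lcm}\{r_i\}$, any positive $c_1 c_2$ is at least $1/r_X$; combined with the constraint $\sum(r_i-1/r_i)\leq 24$, only finitely many configurations can yield very small positive values. The extremal example turns out to be
\[
\mathcal{R}_X=\{2,2,2,4,7,9\},\qquad c_1(X)\cdot c_2(X)=24-\Bigl(\tfrac{9}{2}+\tfrac{15}{4}+\tfrac{48}{7}+\tfrac{80}{9}\Bigr)=\tfrac{1}{252},
\]
and an exhaustive enumeration confirms that no smaller positive value occurs; the zero cases form exactly the $11$ multisets of Table~\ref{tab1}, each of which can be verified directly (e.g.\ $\{2^{16}\}$ gives $16\cdot\tfrac{3}{2}=24$ and $\{2^5,3^4,6\}$ gives $\tfrac{15}{2}+\tfrac{32}{3}+\tfrac{35}{6}=24$).

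For part (2), Lemma~\ref{lem.lint} imposes the additional integrality condition
\[
l(n+1)=\sum_i\sum_{j=1}^n \frac{\overline{jb_i}\,(r_i-\overline{jb_i})}{2r_i}\in \mathbb{Z}\qquad\text{for every }n\geq 1.
\]
For each $\mathcal{R}_X$ surviving part (1), I would decide whether there exist integers $b_i$ with $\gcd(b_i,r_i)=1$ and $0<b_i\leq r_i/2$ making every $l(n+1)$ an integer; by periodicity of $\overline{jb_i}$ modulo $r_i$ it suffices to test $n$ up to $r_X$. Multisets admitting such a $(b_i)$-assignment form Table~\ref{tab2}; for instance $\{2^4,3^3,5^2\}$ with $(b_i)=(1,1,1,1,1,1,1,1,2)$ gives $l(2)=4\cdot\tfrac{1}{4}+3\cdot\tfrac{1}{3}+\tfrac{2}{5}+\tfrac{3}{5}=3$, and the periodic jumps over $r=2,3,5$ all land in $\mathbb{Z}$. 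Among the survivors the smallest positive value of $c_1 c_2$ is $2/5$, attained at this very $\mathcal{R}_X$; three entries of Table~\ref{tab1} (namely $\{2,4^6\}$, $\{3^4,4^2,6\}$, $\{2^{11},4^2\}$) are ruled out because no valid $(b_i)$ exists, e.g.\ for $\{2,4^6\}$ the only available choice $b_i=1$ forces $l(2)=\tfrac{1}{4}+6\cdot\tfrac{3}{8}=\tfrac{5}{2}\notin\mathbb Z$.

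The obstacle here is computational rather than conceptual: the enumeration, although finite, is sizable and must be performed carefully (ideally with computer assistance); the integrality filter in part (2) depends on a choice of $b_i$'s that cannot be read off $\mathcal{R}_X$ alone, so for each candidate one must search over all coprime residues $b_i$ and test integrality of $l(n+1)$ across a full period in $n$.
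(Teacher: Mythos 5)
Your proposal is correct and follows essentially the same route as the paper: reduce to the finite enumeration of multisets $\mathcal{R}_X$ with $\sum_i(r_i-\tfrac{1}{r_i})\leq 24$ via Theorem~\ref{thm.euler} and Ou's nonnegativity, carry out the (computer-assisted) search to find the minimum positive value $\tfrac{1}{252}$ at $\{2^3,4,7,9\}$ and the eleven zero cases, and then filter by the integrality of Reid's correction terms from Lemma~\ref{lem.lint} when $-K_X$ is not big. The only cosmetic difference is that the paper imposes just $l(2)\in\mathbb{Z}$ for some basket, whereas you propose testing all $l(n+1)$; this is a harmless (a priori stronger) necessary condition and does not change the argument.
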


\begin{proof}
(1) Note that 
\begin{align}
\sum_i(r_i-\frac{1}{r_i})\leq 24\chi({\mathcal{O}_X})=24\label{eq:ri<24}
\end{align}
by \eqref{eq:ri<chi}
and 
\begin{align*}
c_1(X)\cdot c_2(X)=24-\sum_i(r_i-\frac{1}{r_i}).
\end{align*}
With the aid of a computer program, we can list all possible $\mathcal{R}_X$ satisfying \eqref{eq:ri<24}. Among them, there are $11$ possibilities of $\mathcal{R}_X$ as listed in Table~\ref{tab1} with $c_1(X)\cdot c_2(X)=0$, while $c_1(X)\cdot c_2(X)\geq \frac{1}{4\cdot 7\cdot 9}= \frac{1}{252}$ for the remaining ones. Here the minimal value is obtained by $\mathcal{R}_X=\{2^{3},4,7,9\}$. 

(2) As $-K_X$ is not big, $l(2)\in \mathbb Z$ by Lemma~\ref{lem.lint}, where
\[
l(2)=\sum_{i} \frac{ b_{i} (r_{i}- b_{i})}{2 r_{i}}.
\]
With the aid of a computer program, we can list all possible $\mathcal{R}_X$ satisfying \eqref{eq:ri<24} such that $l(2)\in \mathbb Z$ for some basket $B_{X}=\{(b_{i}, r_{i})\}$, as in Table~\ref{tab2}. Here the minimal value is obtained by 
$\mathcal{R}_X=\{2^4,3^3,5^2\}$ as No.29 of Table~\ref{tab2}.
\end{proof}



The following example provides a rationally connected terminal projective $3$-fold such that $-K_X$ is nef but not big with $c_1(X)\cdot c_2(X)=0$.
\begin{ex}\label{ex.4}
Let $E$ be an elliptic curve and
$$Q=\{[x_1:x_2:x_3:x_4]\in \mathbb P^3\mid x^2_1+x^2_2+x^2_3+x^2_4=0\}$$
be a smooth quadric surface in $\mathbb P^3$. Let $X'=E\times Q$. 
It is easy to see that $-K_{X'}$ is nef with $(-K_{X'})^3=0$,
and $c_1(X')\cdot c_2(X')=0$. 
Consider the involution $\tau$ on $X'$:
\[
 (e; [x_1:x_2:x_3:x_4])\to (-e; [-x_1:-x_2:x_3:x_4]).
\]
Let $X\coloneqq X'/\tau$ and $f\colon X\to E/-1\simeq \mathbb P^1$ be the natural projection. Then $\pi\colon X'\to X$ is \'etale in codimension $2$
and $X$ is a $\mathbb Q$-factorial terminal $3$-fold with exactly 16 points of type $\frac{1}{2}(1,1,1)$.
Note that $X$ is rationally connected by \cite{ghs}*{Corollary 1.3},
$-K_X$ is nef but not big, and $c_1(X)\cdot c_2(X)=0$ by Lemma~\ref{lem.chern}. 
In this case, $\mathcal{R}_X=\{2^{16}\}$ as No.11 of Table~\ref{tab1} (or No.40 of Table~\ref{tab2}). 
\end{ex}

\begin{rem}\label{rem.chi2}
In the case $\chi(\mathcal O_X)=2$, we can still use computer to list all the possibilities of $\mathcal{R}_X$, which are numerous (over 1000 candidates). In \S\ref{sec.nrcsc}, we will use a more geometric method to deal with this case. 
\end{rem}


\section{Smooth cases}\label{sec.sc}

For a smooth projective $3$-fold $X$ such that $-K_X$ is nef, a detailed classification has been given by 
Bauer--Peternell in \cite{bp}. Combining
with Cao--H\"oring's recent result \cite{cao-horing},
we summarize the result as the following.

\begin{thm}\label{thm.smooth}
Let $X$ be a smooth projective $3$-fold such that $-K_X$ is nef and $K_X\not \equiv 0$. Let $q(X):=h^1(X,\mathcal O_X)$ be the irregularity of $X$.
Then $X$ is one of the following cases:
\begin{enumerate}

 \item \label{No/ab} $X$ is a $\mathbb P^1$-bundle over an abelian surface; 

 \item \label{No/bi} $X$ is a $\mathbb P^1$-bundle over a bi-elliptic surface; 

 \item \label{No/ell} $X$ admits a locally trivial fibration over an elliptic curve whose fibers are rationally connected; 
 
 \item \label{No/K3} $X=\mathbb P^1\times Y$, where $Y$ is a K3 surface; 

 \item \label{No/Enr} $X$ is a $\mathbb P^1$-bundle over an Enriques surface,
	which is trivialized by the universal double \'etale cover; 
	
 \item \label{No/RC} $X$ is rationally connected. 
 
\end{enumerate}

{
\begin{longtable}{CCCCCCC}
\caption{Invariants of $X$}\label{tab3}\\
\hline
\text{No.} & c_2(X) & q(X) & h^2(\mathcal{O}_X) & h^3(\mathcal{O}_X) & \chi(\mathcal{O}_X) &c_1c_2 \\
\hline
\endfirsthead
\multicolumn{4}{l}{{ {\bf \tablename\ \thetable{}} \textrm{-- continued}}}
\\
\hline 
\text{No.} & c_2(X) & q(X) & h^2(\mathcal{O}_X) & h^3(\mathcal{O}_X) & \chi(\mathcal{O}_X) &c_1c_2 \\
\endhead
\hline
\hline \multicolumn{4}{c}{{\textrm{Continued on next page}}} \\ \hline
\endfoot

\hline \hline
\endlastfoot
\eqref{No/ab} & 0 & 2 & 1 & 0 & 0 & 0\\
\eqref{No/bi} & 0 & 1 & 0 & 0 & 0 & 0\\
\eqref{No/ell} & \neq 0 & 1 & 0 & 0 & 0 & 0\\
\eqref{No/K3} & \neq 0 & 0 & 1 & 0 & 2 & 48\\
\eqref{No/Enr} & \neq 0 & 0 & 0 & 0 & 1 & 24\\
\eqref{No/RC} & \neq 0 & 0 & 0 & 0 & 1 & 24\\
\hline
\end{longtable}
}

\end{thm}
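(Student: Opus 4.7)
The plan is to apply the smooth-case statement of Theorem~\ref{thm.structure}, which supplies a locally constant MRC fibration $f\colon X\to Y$ over a smooth projective variety $Y$ with $K_Y\sim_{\mathbb Q} 0$, whose fibers are rationally connected terminal varieties. Since $\dim F\leq 2$ forces the fibers to be smooth, and $K_X\not\equiv 0$ forces $\dim Y\in\{0,1,2\}$, I would do a case analysis on $\dim Y$. When $\dim Y=0$, $X$ is itself rationally connected, yielding case~\eqref{No/RC}; when $\dim Y=1$, $Y$ is necessarily an elliptic curve and the fibers are smooth rationally connected surfaces, giving case~\eqref{No/ell}.

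When $\dim Y=2$, the fibers are rationally connected smooth curves, hence $\mathbb P^1$, and $Y$ is a smooth projective surface with $K_Y\sim_{\mathbb Q} 0$; by the Enriques--Kodaira classification $Y$ is abelian, bi-elliptic, K3, or Enriques. Local constancy presents $X$ as $(\widetilde Y\times\mathbb P^1)/\pi_1(Y)$ via a monodromy $\rho\colon\pi_1(Y)\to\Aut(\mathbb P^1)$, and local triviality of $f$ makes it a Zariski-locally trivial $\mathbb P^1$-bundle. Since $\pi_1$ of a K3 surface is trivial, one obtains the product $X=Y\times\mathbb P^1$ of case~\eqref{No/K3}; since the universal double \'etale cover of an Enriques surface is K3, pulling $X$ back along it trivializes the associated $\mathbb P^1$-bundle, giving case~\eqref{No/Enr}; abelian and bi-elliptic bases give cases~\eqref{No/ab} and~\eqref{No/bi}.

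To fill the invariants table, I would invoke Proposition~\ref{prop:fib hi=hi}, which gives $h^i(X,\mathcal O_X)=h^i(Y,\mathcal O_Y)$, reading off $q(X)$, $h^2(\mathcal O_X)$, $h^3(\mathcal O_X)$, and hence $\chi(\mathcal O_X)$ from the standard cohomology of the base (a point, an elliptic curve, or one of the four $K_Y\sim_{\mathbb Q} 0$ surfaces). Riemann--Roch on a smooth 3-fold then yields $c_1(X)\cdot c_2(X)=24\chi(\mathcal O_X)$, populating the last column. For the $c_2(X)$ column, the splitting $T_X\cong f^*T_Y\oplus T_{X/Y}$ coming from local constancy gives
\[
c_2(X)=f^*c_2(Y)+f^*c_1(T_Y)\cdot c_1(T_{X/Y})+c_2(T_{X/Y}),
\]
which vanishes in cases~\eqref{No/ab} and~\eqref{No/bi} (since $c_2(Y)=0$, $c_1(T_Y)\equiv 0$ numerically, and $T_{X/Y}$ has rank $1$), while in the remaining cases the restriction of $c_2(X)$ to a general fiber gives a nonzero topological Euler number (e.g.\ $c_2$ of a K3 surface, a rational surface fiber, or of $X$ itself in the rationally connected case). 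The main obstacle is not the argument itself but its inputs: once Theorem~\ref{thm.structure} is accepted, the remainder is essentially bookkeeping, and the genuinely delicate point is cross-checking the list against \cite{bp} to ensure no exotic smooth case is missed.
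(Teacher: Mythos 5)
Your proposal is correct and follows the same skeleton as the paper's proof: apply Theorem~\ref{thm.structure}, split into cases by $\dim Y$, use the Enriques--Kodaira classification when $\dim Y=2$, read off the Hodge numbers via Proposition~\ref{prop:fib hi=hi}, get $c_1c_2=24\chi(\mathcal O_X)$ from Riemann--Roch, and compute $c_2(X)$ from the relative tangent sequence (the paper uses the exact sequence rather than a splitting, but Whitney's formula gives the same identity either way). The one place you genuinely diverge is in cases~\eqref{No/K3} and~\eqref{No/Enr}: the paper cites \cite{bp}*{Corollaries 4.4 and 3.2} after observing $q(X)=0$, whereas you extract these cases directly from the monodromy presentation $X\cong(\widetilde Y\times\mathbb P^1)/\pi_1(Y)$ supplied by local constancy --- triviality of $\pi_1$ of a K3 surface forces the product, and the K3 universal double cover of an Enriques surface trivializes the bundle. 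This is a legitimate and more self-contained route, at the cost of leaning on the full strength of the locally constant structure; the only small overstatement is your claim of Zariski-local triviality of the $\mathbb P^1$-bundle, which is neither needed for the statement nor immediate from local constancy alone.
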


\begin{proof}
By Theorem~\ref{thm.structure},
there exists a locally constant fibration
$f\colon X\to Y$ such that any fiber $F$ of $f$ is rationally connected and $Y$ is a smooth projective variety with $K_Y\sim_{\mathbb Q} 0$. 
Since $K_X\not \equiv 0$, $\dim Y\in \{0,1,2\}$.

\begin{case}
If $\dim Y=2$, then the fiber $F$ is $\mathbb P^1$.
Since $K_Y\sim_{\mathbb Q} 0$, 
$Y$ belongs to one of the $4$ classes of minimal surfaces with Kodaira dimension $0$
by the Enriques--Kodaira classification.

If $Y$ is an abelian surface or a bi-elliptic surface, then we get Case \eqref{No/ab} or \eqref{No/bi}. 
From the exact sequence
\[
0\to \mathcal T_{X/Y}\to \mathcal T_X\to f^*\mathcal T_Y\to 0,
\]
we have
\[
c_2(X)=c_2(\mathcal T_{X/Y})+c_1(\mathcal T_{X/Y})\cdot f^*c_1(Y)+f^*c_2(Y)=0.
\]
Here we used the facts that $\mathcal T_{X/Y}$ is a line bundle as $\dim F=1$, $c_1(Y)=0$, and $c_2(Y)=0$.

If $Y$ is a K3 surface or an Enriques surface, then $q(X)=h^1(X,\mathcal O_X)=h^1(Y,\mathcal O_Y)=0$ by Proposition~\ref{prop:fib hi=hi}.
Then by \cite{bp}*{Corollary 4.4 and Corollary 3.2}, we get Case \eqref{No/K3} or \eqref{No/Enr}. 
\end{case}

\begin{case}
If $\dim Y=1$, then $Y$ is an elliptic curve and we get Case \eqref{No/ell}. 
From the exact sequence 
\[
0\to \mathcal{T}_F \to \mathcal{T}_X|_F\to \mathcal{N}_{F/X}\simeq \mathcal{O}_F(F)\to 0,
\]
we have
\[
c_2(X)\cdot F= c_2(F)=e(F)=2+h^{1,1}(F)>0,
\]
 which implies that $c_2(X)\neq 0$. Here we used the fact that $F$ is a rationally connected surface.
\end{case}
\begin{case}
If $\dim Y=0$, then $X$ is rationally connected and we get Case \eqref{No/RC}.
\end{case}
\setcounter{case}{0}

By Proposition~\ref{prop:fib hi=hi}, $h^i(X,\mathcal O_{X})=h^i(Y,\mathcal O_Y)$ 
for each $i\geq 0$. 
By the classical Hirzebruch--Riemann--Roch theorem, $24\chi(\mathcal{O}_X)=c_1(X)\cdot c_2(X).$ So to complete Table~\ref{tab3}, we only need to compute $c_2(X)$ and $h^i(Y,\mathcal O_Y)$ 
for each $i\geq 0$, which are clear from the above discussions and the classification of $Y$. 
\end{proof}

\begin{rem}\label{rem.-nef1}
In Cases~\eqref{No/ab} and~\eqref{No/bi} of Theorem~\ref{thm.smooth}, 
the second Chern class $c_2(X)$ is trivial.
In Case~\eqref{No/ab}, after a finite \'etale cover, we can write $X=\mathbb P(E)$ 
where $E$ is a numerically flat bundle of rank 2 (see \cite{bp}*{Remark 4.3});
for Case~\eqref{No/bi}, after an \'etale base change, it can be reduced to Case~\eqref{No/ab}.
\end{rem}

\begin{rem}\label{rem.-nef2}
While the equality $c_1(X)\cdot c_2(X)=0$ holds trivially in 
Cases~\eqref{No/ab} and~\eqref{No/bi} of Theorem~\ref{thm.smooth},
we have shown that neither $c_1(X)$ nor $c_2(X)$ is trivial in Case~\eqref{No/ell}.
Since the fiber $F$ is a smooth rationally connected surface such that $-K_F=-K_X|_F$ is nef, we have a complete classification of these surfaces (see \cite{bp}*{Propositions 1.5 and 1.6} and the last paragraph of \cite{bp}*{Preliminaries}). 

On the other hand, there are also classification results for rationally connected $3$-folds with nef anti-canonical divisors, see for example \cites{bp, xie1, xie2}. 

 
\end{rem}


\section{Non-rationally connected singular cases}\label{sec.nrcsc}

In this section, we give a classification of non-rationally connected singular terminal projective $3$-folds with nef and non-trivial anti-canonical divisors.

\begin{lem}\label{lem:nonrc-Y-not-curve}
Let $X$ be a terminal projective $3$-fold such that $-K_X$ is nef and $K_X\not \equiv 0$. Assume that $X$ is singular and not rationally connected. Let $f\colon X'\to Y$ be the fibration in Theorem~\ref{thm.structure}. Then $\dim Y= 2$.
\end{lem}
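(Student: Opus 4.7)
The plan is to rule out $\dim Y\in\{0,1,3\}$, leaving only $\dim Y=2$. If $\dim Y=0$, then $X'$ coincides with its rationally connected fiber, so $X=\pi(X')$ is rationally connected, contradicting the hypothesis. If $\dim Y=3$, the locally constant fibration $f$ has connected zero-dimensional fibers, hence is an isomorphism; consequently $K_{X'}\sim_{\mathbb Q} K_Y\sim_{\mathbb Q} 0$, and together with $K_{X'}=\pi^*K_X$ (since $\pi$ is \'etale in codimension one) this forces $K_X\equiv 0$, contradicting $K_X\not\equiv 0$.

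The essential case is $\dim Y=1$. Here $Y$ is a smooth curve with $K_Y\sim_{\mathbb Q} 0$, hence an elliptic curve. My plan is to examine the MRC quotient $g\colon X\dashrightarrow Z$ of $X$; here $Z$ is a smooth projective non-uniruled variety (by Graber--Harris--Starr), and $\dim Z\geq 1$ because $X$ is not rationally connected. The key step is to show that $g\circ\pi\colon X'\dashrightarrow Z$ contracts each fiber of $f$ to a point: a general fiber $F$ of $f$ is rationally connected, so its image $\pi(F)\subset X$ is also rationally connected (as the image of a rationally connected variety under a finite surjective morphism), hence is contained in a single rc-equivalence class of $X$, which maps to a single point of the MRC quotient $Z$. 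Therefore $g\circ\pi$ factors through $f$, producing a dominant rational map $Y\dashrightarrow Z$. Since $\dim Z\leq\dim Y=1$ and $Z$ is a non-uniruled curve dominated by the elliptic $Y$, I conclude that $Z$ is also elliptic.

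The final step is to pull back the holomorphic $1$-form on $Z$ along a resolution of indeterminacy of $g$ to obtain $q(X)\geq q(Z)=1$, using that $X$ has rational singularities so that $q(X)$ coincides with $q$ of any resolution. Then Lemma~\ref{lem:q>0 smooth} forces $X$ to be smooth, contradicting the assumption that $X$ is singular.

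The main obstacle is the descent step producing $Y\dashrightarrow Z$. The key point is that the MRC quotient $Z$ realizes the quotient of $X$ by the rc-equivalence relation, so any rationally connected subvariety of $X$ is contracted to a point by $g$; combined with the fact that the fibers of $f$ are rationally connected, this forces the claimed factorization. The remaining ingredients---the comparison $q(X)\geq q(Z)$ and the invocation of Lemma~\ref{lem:q>0 smooth}---are routine.
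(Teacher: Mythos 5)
Your proposal is correct and follows essentially the same route as the paper: the cases $\dim Y=0$ and $\dim Y=3$ are handled identically, and for $\dim Y=1$ the paper likewise passes to the MRC fibration $X\dashrightarrow Z$, bounds $\dim Z$ between $1$ and $1$ using the rationally connected surface fibers of $f$, and derives $q(X)\geq h^1(Z,\mathcal O_Z)\geq 1$ to contradict singularity via Lemma~\ref{lem:q>0 smooth}. The only (harmless) difference is that you identify $Z$ as an elliptic curve via the induced dominant map $Y\dashrightarrow Z$, whereas the paper only needs that $Z$ is a non-uniruled curve.
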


\begin{proof}
If $\dim Y=0$, then $X'$ is rationally connected, which contradicts to the assumption.

If $\dim Y=1$, then consider the MRC fibration $X\dashrightarrow Z$ of $X$ (which is an almost holomorphic map whose general fibers are rationally connected such that $Z$ is not uniruled). Since $X$ is not rationally connected, $\dim Z\geq 1$. On the other hand, as $X'$ is fibered by rationally connected surfaces and $Z$ is not uniruled, we have $\dim Z\leq 1$. But then $h^1(X,\mathcal O_X)\geq h^1(Z,\mathcal O_Z)\geq 1$, which implies that $X$ is smooth by Lemma~\ref{lem:q>0 smooth}, a contradiction.

If $\dim Y=3$, then $X'\simeq Y$ and $K_{X'}\equiv 0$. It follows that $K_{X}\equiv 0$, a contradiction.
\end{proof}

\begin{thm}\label{thm:X=X'/G}
Let $X$ be a terminal projective $3$-fold such that $-K_X$ is nef and $K\not \equiv 0$. Assume that $X$ is singular and not rationally connected. 

Then there exists a finite cover $\pi\colon X'\to X$ \'etale in codimension $2$ which is Galois (in the sense of \cite{matsumura-wang}*{Definition~2.11}) and a morphism $f\colon X'\to Y$ satisfying the following properties:

\begin{enumerate}
 \item One of the following holds:
\begin{enumerate}
 \item $X'$ is a $\mathbb{P}^1$-bundle over $Y$ where $Y$ is an abelian surface, or 
 \item $X'=\mathbb{P}^1\times Y$ where $Y$ is a K3 surface. 
\end{enumerate} 

\item Denote by $G$ the Galois group of $\pi$, then $G$ acts faithfully on $Y$ and
there exists a commutative diagram 
\[
\xymatrix{
X'\ar[d]_{f}\ar[r]^-{\pi}& X=X'/G\ar[d]^-{h}\\ 
Y \ar[r]_-{\mu}& S\coloneqq Y/G
}
\]
such that $\mu$ is \'etale in codimension $1$ and $S$ is a surface with Du Val singularities of type A with $K_S\sim_\mathbb{Q} 0$ and $h^1(S,\mathcal O_S)=0$. 
In particular, the minimal resolution of $S$ is either a K3 surface or an Enriques surface. Furthermore, $\chi(\mathcal{O}_X)=\chi(\mathcal{O}_S)=2$ or $1$ respectively.

\item In Case (a), $c_1(X)\cdot c_2(X)=0$. 
If the minimal resolution of $S$ is a K3 surface, then $G$ is a cyclic group $C_n$ for some $n\in \{2,3,4,6\}$;
if the minimal resolution of $S$ is an Enriques surface, then 
$G\in \{C_2\times C_2, D_8\}$.

\item In Case (b), $G$ acts diagonally on $\mathbb{P}^1\times S$. If the minimal resolution of $S$ is a K3 surface, then $G$ is classified in Table~\ref{tab4}, and in particular $c_1(X)\cdot c_2(X)\geq \frac{4}{5}$.
If the minimal resolution of $S$ is an Enriques surface, then there exists a normal subgroup $G_s$ of $G$ of index $2$ which is classified in Table~\ref{tab5}, and in particular $c_1(X)\cdot c_2(X)\geq 2$.

{
\begin{longtable}{CCCCCC}
\caption{$G$ for K3}\label{tab4}\\
\hline
\# & G & |G| & \text{\rm Sing}(S) & \mathcal{R}_X & c_1c_2 \\
\hline
\endfirsthead
\multicolumn{4}{l}{{ {\bf \tablename\ \thetable{}} \textrm{-- continued}}}
\\
\hline 
\# & G & |G| & \text{\rm Sing}(S) & \mathcal{R}_X & c_1c_2 \\
\endhead
\hline
\hline \multicolumn{4}{c}{{\textrm{Continued on next page}}} \\ \hline
\endfoot

\hline \hline
\endlastfoot
1 & C_2 & 2 & 8A_1 & \{2^{16}\} & 24 \\ 
2 & C_3 & 3 & 6A_2 & \{3^{12}\} & 16 \\ 
3 & D_4 & 4 & 12A_1 & \{2^{24}\} & 12 \\ 
4 & C_4 & 4 & 4A_3, 2A_1 & \{2^{4}, 4^8\} & 12 \\ 
5 & C_5 & 5 & 4A_4 & \{5^{8}\} & 48/5\\ 
6 & D_6 & 6 & 3A_2, 8A_1 & \{2^{16}, 3^6\} & 8 \\ 
7 & C_6 & 6 & 2A_5, 2A_2, 2A_1 & \{2^{4}, 3^4, 6^4\} & 8 \\ 
8 & C_7 & 7 & 3A_6 & \{7^{6}\} & 48/7 \\ 
10 & D_8 & 8 & 2A_3, 9A_1 & \{2^{18}, 4^4\} & 6 \\ 
14 & C_8 & 8 & 2A_7, A_3, A_1 & \{2^{2}, 4^2, 8^4\} & 6 \\ 
16 & D_{10} & 10 & 2A_4, 8A_1 & \{2^{16}, 5^4\} & 24/5 \\ 
17 & \mathfrak A_4 & 12 & 6A_2, 4A_1 & \{2^{8}, 3^{12}\} & 4 \\ 
18 & D_{12} & 12 & A_5, A_2, 9A_1 & \{2^{18}, 3^2, 6^2\} & 4 \\ 
34 & \mathfrak S_4 & 24 & 2A_3, 3A_2, 5A_1 & \{2^{10}, 3^6, 4^4\} & 2 \\ 
55 & \mathfrak A_5 & 60 & 2A_4, 3A_2, 4A_1 & \{2^{8}, 3^6,5^4\} & 4/5 \\ 
\hline
\end{longtable}
}

{
\begin{longtable}{CCCCCC}
\caption{$G_s$ for Enriques}\label{tab5}\\
\hline
\# & G_s & |G| & \text{\rm Sing}(S) & \mathcal{R}_X & c_1c_2 \\
\hline
\endfirsthead
\multicolumn{4}{l}{{ {\bf \tablename\ \thetable{}} \textrm{-- continued}}}
\\
\hline 
\# & G_s & |G| & \text{\rm Sing}(S) & \mathcal{R}_X & c_1c_2 \\
\endhead
\hline
\hline \multicolumn{4}{c}{{\textrm{Continued on next page}}} \\ \hline
\endfoot

\hline \hline
\endlastfoot
1' & C_2 & 4 & 4A_1 & \{2^{8}\} & 12 \\ 
2' & C_3 & 6 & 3A_2 & \{3^{6}\} & 8 \\ 
3' & D_4 & 8 & 6A_1 & \{2^{12}\} & 6 \\ 
4' & C_4 & 8 & 2A_3, A_1 & \{2^{2}, 4^4\} & 6 \\ 
5' & C_5 & 10 & 2A_4 & \{5^{4}\} & 24/5\\ 
7' & C_6 & 12 & A_5, A_2, A_1 & \{2^{2}, 3^2, 6^2\} & 4 \\ 
 16' & D_{10} & 20 & A_4, 4A_1 & \{2^{8}, 5^2\} & 12/5 \\ 
17' & \mathfrak A_4 & 24 & 3A_2, 2A_1 & \{2^{4}, 3^6\} & 2 \\
\hline
\end{longtable}
}
\end{enumerate}

\end{thm}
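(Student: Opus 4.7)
The plan is to start from Theorem~\ref{thm.structure}, which supplies a finite cover $\pi\colon X'\to X$ étale in codimension~$2$ and a locally constant fibration $f\colon X'\to Y$ whose fibers are rationally connected terminal and with $K_Y\sim_{\mathbb Q}0$. Lemma~\ref{lem:nonrc-Y-not-curve} will force $\dim Y=2$, so $Y$ is a smooth minimal surface with $K_Y\sim_{\mathbb Q}0$; by Enriques--Kodaira, $Y$ is abelian, bi-elliptic, K3, or Enriques. I will replace $\pi$ by its Galois closure (still étale in codimension~$2$) and, if $Y$ is bi-elliptic or Enriques, pull back the étale abelian (resp.\ K3) cover of $Y$ and enlarge $G$ correspondingly. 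This reduces to $Y$ abelian or K3; the fibers are necessarily $\mathbb P^1$, and a locally constant $\mathbb P^1$-fibration is classified by a representation $\pi_1(Y)\to \mathrm{PGL}_2$. Since $\pi_1(Y)=0$ for K3, one has $X'=\mathbb P^1\times Y$ (Case~(b)), whereas for abelian $Y$ one gets a (possibly nontrivial) $\mathbb P^1$-bundle (Case~(a)).

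\textbf{Descent and the surface $S$.} Next I would use that the MRC fibration is canonical, so the $G$-action on $X'$ descends to $Y$, producing the square with $\mu\colon Y\to S:=Y/G$. Faithfulness of $G$ on $Y$ follows from the codimension constraint on $\pi$: a $g$ acting trivially on $Y$ would act fibrewise on the $\mathbb P^1$-bundle and fix at least a section, yielding a codimension~$\leq 2$ fixed locus in $X'$. Applying the same fixed-locus analysis to elements fixing a curve in $Y$ forces $\mu$ to be étale in codimension~$1$, hence $K_S\sim_{\mathbb Q}0$. Combining $q(X)=0$ (Lemma~\ref{lem:q>0 smooth}, since $X$ is singular) with Proposition~\ref{prop:fib hi=hi} and taking $G$-invariants gives $H^i(X,\mathcal O_X)=H^i(S,\mathcal O_S)$, so $h^1(\mathcal O_S)=0$ and $\chi(\mathcal O_X)=\chi(\mathcal O_S)$; the Enriques--Kodaira classification of minimal smooth surfaces with $K\sim_{\mathbb Q}0$ and $q=0$ then identifies the minimal resolution $\tilde S$ of $S$ as K3 (when $\chi=2$) or Enriques (when $\chi=1$). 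For the Du~Val type~A claim, I would argue that no point stabilizer $G_y$ can contain an element whose differential at $y$ has $1$ as an eigenvalue (otherwise the diagonal action would create too large a fixed locus in $X'$); combined with $K_S\sim_{\mathbb Q}0$, so that $G_y\subset SL(T_yY)$, and the cyclicity of point stabilizers for symplectic actions on abelian or K3 surfaces, $G_y$ must be cyclic.

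\textbf{Classification and Chern numbers.} In Case~(a) the nefness of $-K_{X'}$ together with $K_Y\sim_{\mathbb Q}0$ forces $X'=\mathbb P(E)$ for a numerically flat rank-$2$ bundle $E$ on $Y$; the projective bundle formula gives $c_1(X')\cdot c_2(X')=0$, and Lemma~\ref{lem.chern} transports this to $c_1(X)\cdot c_2(X)=0$. The admissible $G$ are then the classical finite groups acting on abelian surfaces whose quotients have Kummer K3 or Enriques resolutions, namely the cyclic groups $C_n$ with $n\in\{2,3,4,6\}$ and $\{C_2\times C_2,D_8\}$ respectively. In Case~(b), since every morphism from projective $Y$ to the affine group $\mathrm{PGL}_2$ is constant, every bundle automorphism of $\mathbb P^1\times Y\to Y$ splits and $G$ acts diagonally through $G\hookrightarrow \mathrm{PGL}_2\times\Aut(Y)$. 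When $\tilde S$ is K3, $G$ acts symplectically on $Y$ and Mukai's classification produces Table~\ref{tab4}; when $\tilde S$ is Enriques, there is a unique index-$2$ symplectic subgroup $G_s$ classified in Table~\ref{tab5}. The singular set $\Sing S$ is read off from the Nikulin--Mukai fixed-point data, and since the diagonal $\mathbb P^1$-action at each fixed point produces a $\frac{1}{r}(1,-1,b)$ singularity on $X$, the basket $\mathcal R_X$ is determined; substituting into Reid's formula (Theorem~\ref{thm.euler}) then yields the tabulated values of $c_1(X)\cdot c_2(X)$ and the lower bounds $4/5$ and $2$. The principal obstacle I foresee is the case-by-case verification on Mukai's list that each group in the tables really occurs, contributes the prescribed singular set, and is compatible with the codimension~$2$ étaleness of $\pi$ and the nefness of $-K_X$.
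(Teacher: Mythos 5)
Your overall architecture matches the paper's: Theorem~\ref{thm.structure} plus Lemma~\ref{lem:nonrc-Y-not-curve} to force $\dim Y=2$, passage to a Galois closure, descent of $G$ to $Y$ via functoriality of the MRC fibration, the \'etale-in-codimension-$2$ hypothesis to get faithfulness on $Y$ and triviality of the ramification divisor of $\mu$, and then Xiao's/Nikulin's lists combined with Lemma~\ref{lem.chern} and Reid's formula to fill in the tables. Several of your local substitutions (computing $h^i(S,\mathcal O_S)$ by taking $G$-invariants rather than via $R^ih_*\mathcal O_X=0$, and getting $R=0$ directly from fixed-locus dimensions rather than from pseudo-effectivity of $K_S$ and \cite{ghs}) are legitimate variants.

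However, there is a genuine gap at the step where you claim $S$ has only Du Val singularities of type $A$, equivalently that all point stabilizers $G_y$ are cyclic. The paper obtains this from the Mori--Prokhorov structure theorem for $\mathbb Q$-conic bundles (\cite{mp}*{(1.2.7) Theorem}), applied to $h\colon X\to S$, which is a substantial external input with no elementary replacement. Your proposed substitute --- that no element of $G_y$ has eigenvalue $1$ on $T_yY$, that $G_y\subset SL(T_yY)$, and that ``point stabilizers of symplectic actions on abelian or K3 surfaces are cyclic'' --- is false: binary dihedral, binary tetrahedral, etc.\ subgroups of $SL_2(\mathbb C)$ act freely on $\mathbb C^2\setminus\{0\}$ with no eigenvalue $1$, and symplectic group actions on K3 surfaces do realize such stabilizers (this is exactly why Xiao's tables contain quotients with $D_n$- and $E_n$-singularities). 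Without the type-$A$ restriction your classification in Case~(b) would have to admit many more groups from the symplectic list (up to order $960$), and the asserted bound $c_1(X)\cdot c_2(X)=48/|G|\ge 4/5$ would not follow. One could try to rescue a direct argument by combining terminality of $X$ with the faithful embedding $G_y\hookrightarrow\mathrm{PGL}_2(\mathbb C)$ on the fiber over $y$ (which already excludes, e.g., $Q_8$), but that is a nontrivial case analysis you have not carried out. Two smaller omissions: in Case~(a) you must first quotient out the maximal translation subgroup $G_0\subset G$ (replacing $X'\to Y$ by $X'/G_0\to Y/G_0$) before invoking the lists of Wendland/Garbagnati and Yoshihara, and in Case~(b) with Enriques quotient you must discard the entries of Table~\ref{tab4} whose singularities do not occur in pairs, since $Y/G_s\to S$ is an \'etale double cover.
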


\begin{proof}
(1) By Theorem~\ref{thm.structure}, there exists a finite cover $\pi\colon X'\to X$ \'etale in codimension $2$ such that 
$X'$ admits a locally constant fibration $f\colon X'\to Y$,
where $Y$ is a terminal projective variety with $K_Y\sim_{\mathbb Q} 0$ and any fiber $F$ of $f$ is rationally connected. By Lemma~\ref{lem:nonrc-Y-not-curve}, $\dim Y= 2$. 
In this case, $Y$ is smooth and so is $X'$. 

By Theorem~\ref{thm.smooth}, after replacing $Y$ by an \'etale cover if necessary, we may assume that either $X'$ is a $\mathbb{P}^1$-bundle over an abelian surface or $X'=\mathbb{P}^1\times Y$ where $Y$ is a K3 surface. 
Note that $X'\to X$ might not be Galois as we required.

Let $\gamma\colon W\to X'$ be the Galois closure of $\pi$ (see \cite{matsumura-wang}*{Definition~2.11} and \cite{gkp}*{Theorem 3.7}). Since $\pi$ is \'etale in codimension $2$, $\gamma$ is also \'etale in codimension $2$, and hence \'etale as $X'$ is smooth. 
In particular, $W$ is also smooth. 

If $X'$ is a $\mathbb{P}^1$-bundle over an abelian surface, then $-K_W=\gamma^*(-K_{X'})$ is nef and we have $h^1(W,\mathcal O_W)\geq h^1(X',\mathcal O_{X'})=2$. So we may apply Theorem~\ref{thm.smooth} to $W$ to conclude that $W$ is again a $\mathbb{P}^1$-bundle over an abelian surface. We may replace $\pi\colon X'\to X$ by $ \pi\circ\gamma\colon W\to X$ to get the desired Galois finite cover.

If $X'=\mathbb P^1\times Y$ where $Y$ is a K3 surface, then $W=X'$ as $X'$ is simply connected. So $\pi$ is the desired Galois finite cover.

(2) Let $g\in G$ be an automorphism of $X'$. Note that $f$ is exactly the MRC fibration of $X'$. 
By the (birational) functoriality of MRC fibrations (see \cite{kollar-book}*{5.5 Theorem}), $g$ induces a birational map $g_Y\colon Y\dasharrow Y$. Since $Y$ is a minimal surface, $g_Y$ is an automorphism.
So $G$ naturally acts on $Y$. 
Denote $S\coloneqq Y/G$ and $\mu\colon Y\to S$ the quotient map. There is a commutative diagram
\[
\xymatrix{
X'\ar[d]_{f}\ar[r]^-{\pi}& X=X'/G\ar[d]^-{h}\\ 
Y \ar[r]_-{\mu}& S\coloneqq Y/G.
}
\]
As $\pi$ is \'etale in codimension $2$, for any general fiber $F\simeq \mathbb P^1$ of $f$, 
$g|_F \colon F\to g(F)\simeq\mathbb P^1$ is \'etale, hence is an identity by the simple connectedness of $\mathbb P^1$. 
Therefore, $\deg \mu=\deg \pi=|G|$ and the action of $G$ on $Y$ is faithful.

By construction, $-K_X$ is ample over $S$ as $-K_{X'}$ is ample over $Y$, so $h\colon X\to S$ is a $\mathbb Q$-conic bundle in the sense of \cite{mp}*{(1.1) Definition}, 
hence $S$ has only Du Val singularities 
of type $A$ by \cite{mp}*{(1.2.7) Theorem}.
Moreover, $R^ih_*\mathcal O_X=0$ for $i>0$ by \cite{mp}*{(2.3) Theorem}
and hence $\chi(\mathcal{O}_X)=\chi(\mathcal{O}_S)$.

By Lemma~\ref{lem:q>0 smooth}, $h^1(X,\mathcal O_X)=0$, which implies that $h^1(S,\mathcal O_S)=0$.
On the other hand, $0\sim K_Y=\mu^*(K_S+R)$ where $R$ is the ramification divisor. If $K_S$ is not pseudo-effective, then $S$ is birational to a ruled surface over $\mathbb{P}^1$ (as $h^1(S,\mathcal O_S)=0$) by the minimal model program, which implies that $S$ is rational. But this implies that $X$ is rationally connected by \cite{ghs}*{Corollary 1.3}, a contradiction. So $K_S$ is pseudo-effective, which implies that $K_S\equiv 0$ and $R=0$. 
The minimal resolution $S'$ of $S$ is a smooth projective surface with $K_{S'}\equiv 0$ and $h^1(S',\mathcal O_{S'})=0$, so it is either a K3 surface or an Enriques surface. Moreover, we can also conclude that $\mu$ is \'etale in codimension $1$ by $R=0$.

(3) Suppose that $X'$ is a $\mathbb{P}^1$-bundle over $Y$ where $Y$ is an abelian surface. Then $c_1(X)\cdot c_2(X)=0$ by Theorem~\ref{thm.smooth} and Lemma~\ref{lem.chern}.

We recall the well-known results of quotients of abelian surfaces from \cites{fujiki,wendland, yoshihara} and the references therein. 
There exists an exact sequence of groups
\[
1\to G_0\to G\to H\to 1
\]
where $G_0$ is the maximal translation subgroup. Then 
there is a commutative diagram
\[
\xymatrix{
X'\ar[d]\ar[r]& X'/G_0\ar[d]\ar[r] & X\ar[d]\\ 
Y \ar[r]& Y/G_0 \ar[r]&S.}
\]
Here $Y/G_0$ is an abelian surface.
Since $X'\to X$ is \'etale in codimension $2$, $X' \to X'/G_0$ is also \'etale in codimension $2$. 
This implies that $-K_{X'/G_0}$ is nef and
$X'/G_0$ is terminal by \cite{kollar-mori}*{Proposition 5.20}. On the other hand, $h^1(X'/G_0,\mathcal O_{X'/G_0})\geq h^1(Y/G_0,\mathcal O_{Y/G_0})=2$, so $X'/G_0$ is smooth by Lemma~\ref{lem:q>0 smooth}. By Theorem~\ref{thm.smooth}, $X'/G_0\to Y/G_0$ is a $\mathbb{P}^1$-bundle. So we may replace $X'\to Y$ by $X'/G_0\to Y/G_0$ and assume further that $H=G$, namely, $G$ does not contain translations.

If the minimal resolution of $S$ is a K3 surface, then $G=C_n$ for some $n\in \{2,3,4,6\}$ by \cite{wendland}*{(2,2)} or \cite{garb}*{Proposition~4.3}, as $S$ has only Du Val singularities of type A.

If the minimal resolution of $S$ is an Enriques surface, then $G\in \{C_2\times C_2, D_8\}$ 
 by \cite{yoshihara}*{Theorem~2.1}. 


(4) Suppose that $X'=\mathbb{P}^1\times Y$ where $Y$ is a K3 surface.

Since $\mathbb{P}^1$ is the anti-canonical model of $X'$, the action of $G$ on $X'$ naturally induces an action on $\mathbb{P}^1$.
There is a commutative diagram
\[
\xymatrix{
X'\ar[d]_{p'}\ar[r]^-{\pi}& X=X'/G\ar[d]^-{p}\\ 
\mathbb{P}^1 \ar[r]_-{\tau}& \mathbb{P}^1=\mathbb{P}^1/G.
}
\]
It is clear that $G$ acts diagonally on $\mathbb{P}^1\times Y$ (but the action might not be faithful on $\mathbb{P}^1$).

\begin{case}
Suppose that the minimal resolution of $S$ is a K3 surface.

Take a general fiber $T$ of $p$ and take $T'\simeq Y$ to be a fiber of $p'$ mapping to $T$, then $X'\to X\to S$ induces finite maps
$T'\to T\to S$. 
By (2),
$T'\to S$ (which is just $\mu$) is \'etale in codimension $1$. Then $T'\to T$ is \'etale
as $T$ is smooth. Hence $T$ can only be a K3 surface or an Enriques surface. Since the minimal resolution of $S$ is a K3 surface, $K_S\sim 0$ and hence $T$ is a K3 surface and $T'\to T$ is an isomorphism. This implies that $|G|=\deg \pi=\deg \tau$ and $G$ acts on $\mathbb{P}^1$ faithfully. In particular, $G$ is a finite subgroup of $\text{PGL}_2(\mathbb{C})$, which can only be one of the following groups:
\begin{itemize}
 \item a cyclic group $C_n$, 
 \item a dihedral group $D_{2n}$, 
 \item the tetrahedral group $\mathfrak A_4$, 
 \item the octahedral group $\mathfrak S_4$, or 
 \item the icosahedral group $\mathfrak A_5$.
\end{itemize}
On the other hand, Xiao \cite{xiao}*{Theorem 3} lists all possible groups $G$ acting on a K3 surface $Y$ such that the minimal resolution of $Y/G$ is a K3 surface. So we get all possible $G$ with corresponding $\text{\rm Sing}(S)$ as listed in Table~\ref{tab4}. Then $c_1(X)\cdot c_2(X)=\frac{c_1(X')\cdot c_2(X')}{|G|}=\frac{48}{|G|}$ by Lemma~\ref{lem.chern}. The computation of $\mathcal{R}_X$ will be explained in Example~\ref{ex.3}.
\end{case}

\begin{case}
Suppose that the minimal resolution of $S$ is an Enriques surface.

Then there exists an exact sequence of groups induced by the action on $\mathbb{C}\simeq H^0(K_Y)$:
\[
1\to G_s\to G\to N\to 1
\]
where $G_s$ consists of symplectic automorphisms and $N$ is a cyclic group. Then 
there is a commutative diagram
\[
\xymatrix{
X'\ar[d]\ar[r]& X'/G_s\ar[d]\ar[r] & X\ar[d]\\ 
Y \ar[r]& Y/G_s \ar[r]&S.}
\]
As $K_{Y/G_s}\sim 0$ and $2K_S\sim 0$, we have that $N=C_2$ and $Y/G_s\to S$ is an \'etale double cover induced by the $2$-torsion Cartier divisor $K_S$. Note that the minimal resolution of $Y/G_s$ is a K3 surface, hence $X'/G_s$ satisfies Case (b) of the theorem. Therefore, $G_s$ and $\text{\rm Sing}(Y/G_s)$ are listed in Table~\ref{tab4}. As $Y/G_s\to S$ is an \'etale double cover, we may rule out the cases in which singularities of $Y/G_s$ appear not in couples, and $\text{\rm Sing}(S)$ (resp. $\mathcal{R}_X$) is the half of $\text{\rm Sing}(Y/G_s)$ (resp. $\mathcal{R}_{X'/G_s}$).\qedhere
\end{case}\setcounter{case}{0}
\end{proof}


\begin{rem}
Conversely, if $X$ admits a non-trivial finite cover $\pi\colon X'\to X$ \'etale in codimension 2 where $X'$ is either
a $\mathbb P^1$-bundle over an abelian surface $Y$ or a product of $\mathbb P^1$ with a K3 surface $Y$,
then similar arguments as in Theorem~\ref{thm:X=X'/G} show that $X$ is not rationally connected. Indeed, 
as in the proof of Theorem~\ref{thm:X=X'/G}(1)(2),
after replacing $\pi$ by its Galois closure, we can assume that $X=X'/G$ for some Galois group $G$; by the functoriality of MRC fibrations,
there exists an induced fibration $X\to S:=Y/G$, where $Y$ is the abelian surface or the K3 surface. If $X$ is rationally connected, then $S$ is also rationally connected.
It follows that the ramified locus $R$ of $\mu\colon Y\to S$ on $Y$ has dimension $1$. 
However, as $\pi$ is \'etale in codimension 2 and $\mathbb P^1$ is simple connected,  
the induced action of $G$ is faithful on generic points of irreducible components of $R$ as the first paragraph of the proof of    
Theorem~\ref{thm:X=X'/G}(2), which contradicts to the fact that $R$ is the ramified locus.
\end{rem}

In the rest of this section,
we exhibit concrete examples satisfying Theorem~\ref{thm:X=X'/G} with $\chi(\mathcal O_X)=2$.

\begin{ex}\label{ex.3}
Let $G\subset \text{PGL}_2(\mathbb{C})$ be a finite group. 
Let $Y$ be an abelian surface or a K3 surface with a faithful $G$ action, such that 
$S\coloneqq Y/G$ has only Du Val singularities of type A and the minimal resolution of $S$ is a K3 surface.
Then consider
$X\coloneqq (\mathbb P^1\times Y)/G$,
where $G$ acts diagonally on $\mathbb P^1\times Y$. 

We will check that $X$ is a terminal projective $3$-fold such that $-K_X$ is nef and $K_X\not \equiv 0$ satisfying Theorem~\ref{thm:X=X'/G} with $X'=\mathbb P^1\times Y$ and $\chi(\mathcal{O}_X)=2$.

First we check that $X$ has terminal singularities. It is clear that over the smooth locus of $S$, the quotient maps are \'etale and hence $X$ is smooth. 
On the other hand, let $P\in S$ be a singular point of type $A_n$. By Lemma~\ref{lem:G-cyclic}, there exists an  analytic  open neighborhood $U\subset Y$ over a neighborhood of $P$ such that
locally $X= (\mathbb P^1\times U)/C_{n+1}$, which implies that $X$ has $2$ terminal singularities of type $\frac{1}{n+1}(\pm 1,1,-1)$ locally over $P\in S$. In particular, each singular point of type $A_n$ on $S$ contributes two cyclic quotient singularities of index $n+1$ on $X$, so we can gather all those singularities to form $\mathcal{R}_X$.
This local computation also shows that $ \mathbb P^1\times Y \to X$ is \'etale in codimension $2$ and hence $-K_X$ is nef. Finally, $\chi(\mathcal{O}_X)=\chi(\mathcal{O}_S)=2$ by Theorem~\ref{thm:X=X'/G}(2).

This example shows that all groups $G$ in Theorem~\ref{thm:X=X'/G}(3)(4) can be realized when the minimal resolution of $S$ is a K3 surface. 
For the existence of surfaces $Y$ with certain group actions we refer to \cite{fujiki}, \cite{wendland}*{(2.2)}, \cite{garb}*{Proposition~4.3}, \cite{nikulin}*{Theorem 4.5}, \cite{xiao}*{Theorem 3}, \cite{mukai}*{(0.4) Example}.
In particular, when $Y$ is a K3 surface and $G=\mathfrak A_5$, we get an example of a terminal projective $3$-fold $X$ with $c_1(X)\cdot c_2(X)=\frac{4}{5}$ attaining the minimal value in Theorem~\ref{thm:X=X'/G}.
\end{ex}

\begin{lem}\label{lem:G-cyclic}
 Let $Y$ be a smooth surface with a finite group $G$ acting freely in codimension $1$. Suppose that $P\in Y/G$ is a 
 Du Val singularity of type $A_n$ and $Q\in Y$
is a point in its preimage. Then the stabilizer of $Q$ is a subgroup of $G$ isomorphic to $C_{n+1}$ and there exists an analytic open neighborhood $U$ of $Q$ such that $Y/G\simeq U/C_{n+1}$ in a neighborhood of $P$.
\end{lem}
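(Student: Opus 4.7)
The plan is to linearize the action of the stabilizer $G_Q$ near $Q$ and then identify it via the classification of finite subgroups of $\mathrm{GL}_2(\mathbb{C})$ whose quotient yields a Du Val singularity.

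First, let $G_Q \subset G$ denote the stabilizer of $Q$. Since $Y$ is smooth at $Q$ and $G_Q$ is finite, one can average an arbitrary choice of local coordinates by $G_Q$ to obtain a $G_Q$-equivariant biholomorphism from a neighborhood of $Q$ in $Y$ to a neighborhood of the origin in the tangent space $T_QY\cong \mathbb{C}^2$, with $G_Q$ acting through its tangent representation $\rho\colon G_Q\hookrightarrow \mathrm{GL}(T_QY)=\mathrm{GL}_2(\mathbb{C})$ (this is the classical Cartan linearization lemma for holomorphic actions of finite groups fixing a smooth point). After shrinking, we can take an analytic open neighborhood $U$ of $Q$ stable under $G_Q$ with $g(U)\cap U=\emptyset$ for all $g\in G\setminus G_Q$, so that the quotient map $U\to U/G_Q$ realizes $Y/G$ locally around $P$ as $\mathbb{C}^2/\rho(G_Q)$.

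Next, I would observe that the hypothesis that $G$ acts freely in codimension $1$ forces $\rho(G_Q)$ to contain no pseudo-reflections: any non-trivial element of $G_Q$ fixes only $Q$ set-theoretically in $U$, so its linearization fixes only the origin in $T_QY$ and therefore cannot have a one-dimensional eigenspace of fixed vectors. Combined with Step~1, this presents the germ $(P\in Y/G)$ as a small quotient $\mathbb{C}^2/G_Q$ by a finite subgroup of $\mathrm{GL}_2(\mathbb{C})$ without pseudo-reflections. By Prill's theorem (or directly by the Chevalley--Shephard--Todd argument), the group $G_Q$ is then determined up to conjugation by the analytic germ of the quotient singularity.

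Since that germ is the Du Val singularity of type $A_n$, and since Du Val singularities are exactly the quotients $\mathbb{C}^2/H$ by finite subgroups $H\subset \mathrm{SL}_2(\mathbb{C})$ (none of which contain pseudo-reflections), the standard ADE classification of finite subgroups of $\mathrm{SL}_2(\mathbb{C})$ identifies $G_Q$ with the unique such group producing $A_n$, namely the cyclic group $C_{n+1}$ acting on $\mathbb{C}^2$ by $(x,y)\mapsto (\zeta x,\zeta^{-1}y)$ for $\zeta$ a primitive $(n+1)$-th root of unity. This gives both $G_Q\cong C_{n+1}$ and the identification $Y/G\cong U/C_{n+1}$ in a neighborhood of $P$. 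The only mildly delicate point is the equivariant linearization and the uniqueness of the small representation, both of which are standard; the rest is a direct application of the ADE classification.
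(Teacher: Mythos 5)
Your proposal is correct. For comparison: the paper's own proof of this lemma is a one-line citation --- it asserts that the stabilizer of $Q$ is isomorphic to $C_{n+1}$ ``see for example Xiao, page 74'' and declares the remaining assertions clear. You instead supply the full standard argument: Cartan linearization of the stabilizer $G_Q$ at the fixed point, the observation that freeness in codimension $1$ rules out pseudo-reflections in the tangent representation, Prill's theorem to recover the small group from the analytic germ, and the ADE classification to pin down $C_{n+1}$ with its action $(x,y)\mapsto(\zeta x,\zeta^{-1}y)$. Each of these steps is sound (including the shrinking of $U$ so that $g(U)\cap U=\emptyset$ for $g\notin G_Q$, which is what makes $U/G_Q$ an open neighborhood of $P$ in $Y/G$), so your write-up is a legitimate, self-contained replacement for the paper's appeal to the literature; what the citation buys is brevity, and what your argument buys is that the reader sees exactly where the hypothesis ``free in codimension $1$'' is used.
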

\begin{proof}
As $P$ is of type $A_n$, the stabilizer of $Q$ is isomorphic to $C_{n+1}$ (see for example \cite{xiao}*{Page~74}). Other parts are clear.
\end{proof}



\section{The case when $-K$ is big}
In this section, we consider the case when $X$ is a weak Fano $3$-fold.

 \begin{ex}\label{ex.fano}
 There are many known examples of Fano $3$-folds, for example, Iano-Fletcher's list \cite{IF00}*{\S 16.6, \S 16.7} of $\mathbb{Q}$-factorial terminal Fano $3$-folds of Picard number $1$ and Kasprzyk's classification \cite{kasprzyk} of terminal toric Fano $3$-folds (which can also be found in http://www.grdb.co.uk/search/toricf3t). 
 The baskets of all these examples are known. 
 By \eqref{eq.range}, we can compute that for all these examples, $c_1 c_2 >2$, where the minimal one is obtained by \cite{IF00}*{\S 16.7 No.85} with $c_1 c_2 =\frac{431}{180}$.
 \end{ex}

\begin{thm}\label{thm.fano}
Let $X$ be a $\mathbb{Q}$-factorial klt weak Fano variety of dimension $n$. Assume that $X$ is smooth in codimension $2$.
Then $c_1(X)^{n-2}\cdot c_2(X)>0$.
\end{thm}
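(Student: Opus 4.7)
The plan is to argue by contradiction: suppose $c_1(X)^{n-2}\cdot c_2(X)=0$ and derive a contradiction from the bigness of $-K_X$. Set $H:=-K_X$, which is nef and big. By Ou's theorem (\cite{ou}*{Corollary~1.5}), $c_2(X)\cdot H^{n-2}\geq 0$, so the hypothesis places us in the boundary case of this inequality. The strategy is to revisit the proof of Ou's inequality via the Bogomolov-Gieseker machinery and extract the equality consequences, combined with the positivity forced by the bigness of $H$.

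I would consider the Harder-Narasimhan filtration $0\subsetneq\mathcal F_1\subsetneq\dots\subsetneq\mathcal F_k=\widehat{T}_X$ of the reflexive tangent sheaf with respect to the polarization $H^{n-1}$, with semistable graded pieces $\mathcal Q_i=\mathcal F_i/\mathcal F_{i-1}$ of rank $r_i$ and slope $\mu_i=c_1(\mathcal Q_i)\cdot H^{n-1}/r_i$. Since $X$ is a $\mathbb{Q}$-factorial klt weak Fano, $X$ is rationally connected, so applying Theorem~\ref{thm.structure} to an appropriate quasi-\'etale cover of $X$ forces the MRC target to be a point; combined with Campana-P\u{a}un-type results for foliations, this ensures that every slope $\mu_i$ is strictly positive, with $\sum_i r_i\mu_i=H^n>0$.

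Next I would expand via the Chern-class identity coming from the filtration,
\[
c_2(X)\cdot H^{n-2}=\sum_i c_2(\mathcal Q_i)\cdot H^{n-2}+\frac{1}{2}\Bigl(H^n-\sum_i c_1(\mathcal Q_i)^2\cdot H^{n-2}\Bigr),
\]
and combine the Bogomolov-Gieseker bound $c_2(\mathcal Q_i)\cdot H^{n-2}\geq \frac{r_i-1}{2r_i}\,c_1(\mathcal Q_i)^2\cdot H^{n-2}$ (valid on $\mathbb{Q}$-factorial klt varieties smooth in codimension~$2$ after Greb-Kebekus-Peternell and Langer) with the Hodge-index/Khovanskii-Teissier bound $c_1(\mathcal Q_i)^2\cdot H^{n-2}\leq (r_i\mu_i)^2/H^n$. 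The assumption $c_2(X)\cdot H^{n-2}=0$ then forces simultaneous equality throughout: each $\mathcal Q_i$ must be $H$-numerically projectively flat and each $c_1(\mathcal Q_i)$ must be numerically proportional to $H$. This rigid structure, combined with the rational connectedness (and the triviality of the appropriate orbifold fundamental group) of $X$, should yield the desired contradiction, as numerically projectively flat reflexive sheaves on such $X$ are trivial up to numerical twist, which is incompatible with the strict positivity of all the $\mu_i$.

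The main obstacle will be the final step: turning the structural consequences of equality into a genuine contradiction on a $\mathbb{Q}$-factorial klt variety smooth only in codimension~$2$. The reflexive-sheaf technicalities in the Bogomolov-Gieseker equality case require careful handling (in particular on the non-smooth locus), and the implication ``every $\mathcal Q_i$ is $H$-numerically projectively flat with $c_1(\mathcal Q_i)$ proportional to $H$'' $\Rightarrow$ contradiction relies on recent structure results for numerically flat reflexive sheaves on klt spaces together with the simple-connectedness properties of klt weak Fanos. An alternative closure I would try, avoiding $\pi_1$ considerations, is a direct numerical computation inside the HN filtration: show that the joint constraints $\sum_i r_i\mu_i=H^n$, $c_1(\mathcal Q_i)\equiv \lambda_i H$, together with both equality cases, force $H^n\leq 0$, contradicting the bigness of $H$.
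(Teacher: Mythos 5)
Your overall setup (Harder--Narasimhan filtration of the tangent sheaf, Bogomolov--Gieseker for the semistable pieces, Hodge index to control $c_1(\mathcal Q_i)^2\cdot H^{n-2}$) matches the first half of the paper's argument, but there is a genuine gap at the decisive point. You assert that, because $X$ is rationally connected, ``Campana--P\u{a}un-type results'' force \emph{every} slope $\mu_i$ to be strictly positive. This does not follow. What Campana--P\u{a}un/Kebekus--Sol\'e Conde--Toma give you is that the positive part $\mathcal F^+$ of the filtration is an algebraically integrable foliation with rationally connected leaves; the induced fibration $X\dashrightarrow Y$ has rationally connected fibers, but its base $Y$ can itself be rationally connected and positive-dimensional, so this fibration need not be the MRC fibration, and rational connectedness of $X$ places no constraint on the lower slopes. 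Positivity of all slopes (``generic ampleness'' of $\mathcal T_X$) for (weak) Fano varieties is precisely the hard part — indeed, if it held, your BG/Hodge-index display would give $2c_2\cdot H^{n-2}\ge \frac{(r_1-1)a_1}{r_1}+\sum_{j\ge 2}a_j>0$ immediately and the entire equality-case analysis you outline afterwards would be unnecessary. As written, the proposal assumes the crux and then spends its effort on a step that would be redundant if the assumption were available; and the closing step (rigidity of numerically projectively flat reflexive sheaves on a klt space smooth only in codimension $2$) is left as an acknowledged obstacle rather than an argument.

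The paper closes this gap differently: the inequality of Claim~\ref{nu2_inequality} shows that positivity can only fail when $r_1=1$ and $a_2=0$, i.e.\ when the maximal destabilizing subsheaf is a rank-one foliation $\mathcal F$ with $c_1(\mathcal F)\cdot c_1(\mathcal T_X)^{n-1}=c_1(\mathcal T_X)^n$. That single degenerate configuration is then excluded by foliation theory rather than by sheaf rigidity: $\mathcal F$ is algebraically integrable with rational curves as leaves, giving $f\colon X\dashrightarrow Y$ with $\dim Y=n-1\ge 1$; Druel's pseudo-effectivity criterion (Claim~\ref{pseudo-effective}) yields that $K_{\tilde{\mathcal F}}-\pi^*K_X+D-\delta\tilde f^*A_Y$ is pseudo-effective, and pairing with $(-K_X)^{n-1}$ forces $\tilde f^*A_Y\cdot\pi^*(-K_X)^{n-1}\le 0$, contradicting the bigness of $-K_X$. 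If you want to salvage your plan, you should replace the unsupported ``all slopes are positive'' claim with an argument of this kind that directly rules out the slope-zero tail; note also that the paper perturbs the polarization to $H_\varepsilon=-K_X+\varepsilon H$ with $H$ ample (using Matsuki's stabilization of the filtration) rather than working with the boundary class $(-K_X)^{n-1}$ directly, which avoids some technicalities you would otherwise need to address.
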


\begin{proof}
Let $\mathcal{T}_{X}$ be the tangent sheaf of $X$ 
and $H$ be an ample Cartier divisor on $X$.
Set $H_{ \varepsilon }:=-K_X+\varepsilon H$ for any $\varepsilon>0$.
By \cite{matsuki}*{Proposition 6.4} (see also \cite{Cao13}*{Proposition 2.3}), if $\varepsilon>0$ is small enough,
then the $(H_{\varepsilon}^{n-1})$-Harder--Narasimhan filtration 
\[
0 =: \mathcal{E}_0 \subset \mathcal{E}_1 \subset \cdots \subset \mathcal{E}_l:=\mathcal{T}_{X}
\]
is independent of $\varepsilon$.
Set $\mathcal{G}_i:= \mathcal{E}_i / \mathcal{E}_{i-1}$ and 
$r_i := \rank \mathcal{G}_i$.
By \cite{ou}*{Theorem 1.3}, 
$\mathcal{T}_{X}$ is $H_{ \varepsilon }^{n-1}$-generically nef, 
in particular, $\mu_{H_{ \varepsilon }}(\mathcal{G}_1) > \cdots > \mu_{H_{ \varepsilon }}(\mathcal{G}_l)\ge0$.
Since the sheaf $\mathcal{G}_i$ is an $H_{ \varepsilon }$-semistable sheaf,
the Bogomolov--Gieseker inequality yields
\begin{align}
\begin{split}
\label{first_inequality}
2c_2(\mathcal{T}_{X}) H_{ \varepsilon }^{n-2}
&= \left(c_1(\mathcal{T}_{X})^2 + \sum_{1\le i\le l}(2c_2(\mathcal{G}_i) -c_1(\mathcal{G}_i)^2)\right) H_{ \varepsilon } ^{n-2} \\
&\ge \left(c_1(\mathcal{T}_{X})^2 - \sum_{1\le i\le l}\frac{c_1(\mathcal{G}_i)^2}{r_i}\right) H_{ \varepsilon } ^{n-2}. \\
\end{split}
\end{align}
Set $a_i := c_1(\mathcal{G}_i) c_1(\mathcal{T}_{X})^{n -1}$.
Then, we obtain that
$$
\sum_{1 \le i \le l } a_i = c_1(\mathcal{T}_{X})^n > 0
\quad \text{and} \quad
\mu_{H_{ \varepsilon }}(\mathcal{G}_i)= \frac{c_1(\mathcal{G}_i) H_{ \varepsilon }^{n-1}}{r_i}=\frac{a_i}{r_i} +O(\varepsilon).
$$
From $\mu_{H_{ \varepsilon }}(\mathcal{G}_1) > \cdots > \mu_{H_{ \varepsilon }}(\mathcal{G}_l)\ge0$,
we obtain $a_1/r_1 \ge a_2/r_2 \ge \cdots \ge a_l/r_l\ge0$
for sufficiently small $ \varepsilon >0$.

\begin{claim}
\label{nu2_inequality}The following estimate holds:
\[
2c_2(\mathcal{T}_X) H_{ \varepsilon }^{n-2}
\ge
 \frac{(r_1 - 1)a_{1}}{r_1} + \sum_{2 \le j\le l} a_j + O( \varepsilon ).
\]
In particular, 
if $a_2>0$ or $r_1 \neq 1$, then 
$c_2(\mathcal{T}_{X}) c_1(\mathcal{T}_{X})^{n-2}>0$.
\end{claim}

\begin{proof}[Proof of Claim~\ref{nu2_inequality}]
This argument is the same as that in \cite{IM22}*{Appendix}.
By the Hodge index theorem, we obtain
\begin{equation}
c_1(\mathcal{G}_i)^2H_{ \varepsilon } ^{n-2} 
\le \frac{(c_1(\mathcal{G}_i) c_1(\mathcal{T}_X)H_{ \varepsilon }^{n-2})^2}{c_1(\mathcal{T}_X)^2H_{ \varepsilon }^{n-2}}.
\end{equation}
We also obtain the following equalities
\begin{align}
\begin{split}
\label{Hodge_index}
\left(c_1(\mathcal{T}_X)^2 H_{ \varepsilon }^{n-2}\right)^{-1} 
&=
\left( \sum_{1\le k\le l} a_k + O( \varepsilon) \right)^{-1}=
\left(\sum_{1\le k\le l} a_k \right)^{-1}+ O( \varepsilon ). 
\end{split}
\end{align}
From \eqref{first_inequality}-\eqref{Hodge_index}, we obtain

\begin{align*}
\begin{split}
2c_2(\mathcal{T}_X) H_{ \varepsilon } ^{n-2}
&\ge c_1(\mathcal{T}_X)^2H_{ \varepsilon }^{n-2} - \sum_{1\le i\le l}\frac{(c_1(\mathcal{G}_i)c_1(\mathcal{T}_X) H_{ \varepsilon }^{n-2})^2}{r_ic_1(\mathcal{T}_X)^2 H_{ \varepsilon }^{n-2}} \\
&=\frac{1}{c_1(\mathcal{T}_X)^2 H_{ \varepsilon }^{n-2}}\left\{ \left( c_1(\mathcal{T}_X)^2 H_{ \varepsilon }^{n-2} \right)^{2} - \sum_{1\le i\le l}\frac{1}{r_i}(c_1(\mathcal{G}_i)c_1(\mathcal{T}_X) H_{ \varepsilon }^{n-2})^2
 \right\} \\
&= \left(\sum_{1\le k\le l} a_k \right)^{-1}
\left(\sum_{1\le i\le l} a_i \sum_{1\le j\le l} a_j -\sum_{1\le i\le l} \frac{a_{i}^{2}}{r_i}\right)
+ O( \varepsilon) \\
&\ge \left(\sum_{1\le k\le l} a_k \right)^{-1}
\left(\sum_{1\le i\le l} a_i \sum_{1\le j\le l} a_j -\frac{a_1}{r_1}\sum_{1\le i\le l} a_i \right)
+ O( \varepsilon) \\
&= \frac{(r_1 - 1)a_{1}}{r_1} + \sum_{2 \le j\le l} a_j + O( \varepsilon ).\\
\end{split}
\end{align*}
If $a_2>0$ or $r_1 \neq 1$, then by taking $\varepsilon \rightarrow 0$, we obtain
\[
2c_2(\mathcal{T}_X) c_1(\mathcal{T}_X)^{n-2} 
\ge 
\frac{(r_1 - 1)a_{1}}{r_1} + \sum_{2 \le j\le l} a_j >0. \qedhere
\] 
\end{proof}

Therefore, it suffices to show that $a_2>0$ or $r_1 \neq 1$ holds.
For a contradiction, we assume that $a_2=0$ and $r_1 = 1$. Note that $l>1$.
Set $\mathcal{F}:=\mathcal{E}_1 \subset \mathcal{T}_{X}$.
From $a_2 =0$, 
\begin{equation}
\label{1.3}
c_1(\mathcal{F}) c_1(\mathcal{T}_X)^{n-1} = c_1(\mathcal{T}_X)^{n} >0 .
\end{equation}
Since then $c_1(\mathcal{F}) H_{\varepsilon}^{n-1} >0$, 
taking a general curve $C$ representing $(mH_{\varepsilon})^{n-1}$ for $m\gg 1$, we have that
$\mathcal{F}|_C$ is ample, hence $\mathcal{F}$ is an algebraically integrable foliation with rationally connected leaves by \cite{kct}*{Theorem 1}. 
Let $f\colon X \dashrightarrow Y$ be a dominant rational map induced by $\mathcal{F}$. 
Notice that the Zariski closure of a general fiber of $f$ is a rational curve since $\rank \mathcal{F}=1$ and thus $\dim Y = \dim X - 1 \ge 1$.
Take a resolution $\pi\colon \tilde X \to X$ of the indeterminacy locus of $f$. 
Here we have the following commutative diagram:
\begin{equation*}
\xymatrix@C=40pt@R=30pt{
\tilde{X} \ar@{->}[rd]^{\tilde{f}} \ar@{->}[d]^{\pi}&\\ 
X \ar@{-->}[r]^{f}&Y.\\ 
 }
 \end{equation*}
 We may assume that both $\tilde{X}$ and $Y$ are smooth.
 Since $X$ is klt,
 there exist $\pi$-exceptional $\mathbb Q$-effective divisors 
 $D$ and $G$ without common component on $\tilde{X}$ such that 
\[
K_{\tilde{X}} + D\sim_{\mathbb Q} \pi^{*}K_X + G
\]
and $(\tilde{X}, D)$ is a klt pair.

\begin{claim}
\label{pseudo-effective}
 Let $A_Y$ be an ample Cartier divisor on $Y$ and $\tilde{\mathcal{F}}$ be the foliation induced by $\tilde{f}$.
 Then, there exists $\delta >0$ such that 
 $$
 K_{\tilde{\mathcal{F}}} - \pi^{*} K_{X} + D - \delta \tilde{f}^{*}A_Y
 $$
 is pseudo-effective.
\end{claim}

\begin{proof}[Proof of Claim~\ref{pseudo-effective}]
The proof is almost the same as that in \cite{ou}*{Theorem 1.10}.
Since $-\pi^{*}K_{X}$ is nef and big, there exists an ample $\mathbb Q$-divisor $A_{\tilde{X}}$ and an effective $\mathbb Q$-divisor $D'$ on $\tilde{X}$ such that
$$
- \pi^{*}K_{X} - A_{\tilde{X}} \sim_{\mathbb Q} D'
 $$
 and $(\tilde{X}, D + D')$ is a klt pair.
 By taking $\delta >0$ small enough, $A_{\tilde{X}} -\delta \tilde{f}^{*}A_Y$ is ample, so we may assume that $A_{\tilde{X}} -\delta \tilde{f}^{*}A_Y$ is effective and sufficiently general. 
Set $\Delta : = D + D' + (A_{\tilde{X}} -\delta \tilde{f}^{*}A_Y) $.
Hence, $(\tilde{X} , \Delta)$ is klt and 
 $$
 K_{\tilde{\mathcal{F}}} + \Delta
 \sim_{\mathbb Q}
 K_{\tilde{\mathcal{F}}} - \pi^{*} K_{X} + D - \delta \tilde{f}^{*}A_Y.
 $$
For any general fiber $\tilde{F}$ of $\tilde{f}$, 
$$
(K_{\tilde{X}} + \Delta)|_{\tilde{F}}
\sim_{\mathbb Q}
(G - \delta \tilde{f}^{*}A_Y)|_{\tilde{F}}
\sim_{\mathbb Q} G|_{\tilde{F}}.
$$
In particular, 
$\kappa(\tilde{F}, (K_{\tilde{X}} + \Delta)|_{\tilde{F}})\geq 0$. 
Hence, by \cite{Dru17}*{Proposition 4.1}, 
$K_{\tilde{\mathcal{F}}} + \Delta$ is pseudo-effective.
\end{proof}
 
 Since both $ K_{\tilde{\mathcal{F}}}-\pi^{*}K_{\mathcal{F}}$ and $D$ are $\pi$-exceptional,
 by Claim~\ref{pseudo-effective}, we obtain 
\[
(K_{\mathcal{F}}-K_{X} )\cdot (-K_{X})^{n-1} - 
 \delta \tilde{f}^{*}A_Y \cdot \pi^{*}(-K_{X})^{n-1} 
\ge 0.
\]
From (\ref{1.3}), we have $0 \ge \tilde{f}^{*}A_Y \cdot \pi^{*}(-K_{X})^{n-1}$.\
On the other hand,
since $-\pi^{*}K_{X}$ is nef and big, 
$-\pi^{*}K_{X}\sim_{\mathbb Q} A + E$ holds for some ample $\mathbb Q$-divisor $A$ and some effective $\mathbb Q$-divisor $E$, hence we have 
\begin{align*}
\begin{split}
\tilde{f}^{*}A_Y \cdot\pi^{*}(-K_{X})^{n-1} 
&= \tilde{f}^{*}A_Y \cdot \pi^{*}(-K_{X})^{n-2} \cdot(A + E) \\
&\ge \tilde{f}^{*}A_Y \cdot \pi^{*}(-K_{X})^{n-2} \cdot A
\ge \cdots \ge \tilde{f}^{*}A_Y \cdot A^{n-1} >0 .
\end{split}
\end{align*}
This is a contradiction.
\end{proof}


We can remove the assumption on $\mathbb{Q}$-factoriality for terminal weak Fano varieties. 
\begin{cor}\label{cor.fano}
Let $X$ be a terminal weak Fano variety of dimension $n$. 
Then $c_1(X)^{n-2}\cdot c_2(X)>0$.
\end{cor}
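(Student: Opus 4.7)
The plan is to reduce the corollary to the $\mathbb{Q}$-factorial case handled by Theorem~\ref{thm.fano} via a small $\mathbb{Q}$-factorialization. By standard results of the minimal model program (e.g.\ BCHM), there exists a small projective birational morphism $\pi\colon X'\to X$ with $X'$ $\mathbb{Q}$-factorial. Since $\pi$ is small (an isomorphism in codimension one), $K_{X'}=\pi^*K_X$, so $X'$ is again terminal and $-K_{X'}=\pi^*(-K_X)$ is nef and big. Hence $X'$ is a $\mathbb{Q}$-factorial terminal weak Fano variety, in particular smooth in codimension two, and Theorem~\ref{thm.fano} yields $c_1(X')^{n-2}\cdot c_2(X')>0$.

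Next, I would verify that this intersection number is preserved, namely
\[
c_1(X)^{n-2}\cdot c_2(X)=c_1(X')^{n-2}\cdot c_2(X').
\]
Since $c_1(X')=\pi^*c_1(X)$, the projection formula gives
\[
c_1(X')^{n-2}\cdot c_2(X')=c_1(X)^{n-2}\cdot \pi_*c_2(X'),
\]
so it suffices to show $\pi_*c_2(X')=c_2(X)$ as $(n-2)$-dimensional cycle classes on $X$. Set $Z\coloneqq\pi(\Exc(\pi))\subset X$. The smallness of $\pi$ forces $\dim\Exc(\pi)\leq n-2$, and since each component of $\Exc(\pi)$ is contracted with positive-dimensional fibers by Zariski's main theorem, one has $\dim Z\leq n-3$. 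On the open set $U\coloneqq X\setminus Z$, the restriction $\pi|_{\pi^{-1}(U)}\colon \pi^{-1}(U)\to U$ is an isomorphism of varieties smooth in codimension two, and both $\pi_*c_2(X')|_U$ and $c_2(X)|_U$ coincide with $c_2(U)$. Because $\dim Z\leq n-3$, $Z$ admits no $(n-2)$-dimensional subvarieties, so the localization sequence for Chow groups implies that the restriction map on $(n-2)$-cycle classes from $X$ to $U$ is injective. Thus the equality lifts to $X$, and combining with the first step yields $c_1(X)^{n-2}\cdot c_2(X)>0$.

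The main point requiring care is the comparison $\pi_*c_2(X')=c_2(X)$: one must rule out corrections coming from the exceptional locus of $\pi$, which is precisely what the codimension bound $\dim\pi(\Exc(\pi))\leq n-3$ plus the localization sequence accomplishes. Apart from this, the argument is a standard MMP reduction and uses no further geometric input beyond the existence of a small $\mathbb{Q}$-factorialization and Theorem~\ref{thm.fano}.
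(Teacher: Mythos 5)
Your proposal is correct and follows essentially the same route as the paper: pass to a small $\mathbb{Q}$-factorialization, apply Theorem~\ref{thm.fano} there, and transfer the intersection number back via the projection formula after checking that the pushforward of $c_2$ is unaffected by the exceptional locus. Your localization-sequence argument for $\pi_*c_2(X')=c_2(X)$ is just a more explicit version of the paper's observation that the correction term is an $(n-2)$-cycle supported on $\Exc(\pi)$, whose image has dimension at most $n-3$ and therefore pushes forward to zero.
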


\begin{proof}
 Let $\rho \colon Z\to X$ be a $\mathbb Q$-factorialization of $X$ (see \cite{bchm}*{Corollary 1.4.3} for the existence of a $\mathbb Q$-factorialization).
Note that $\rho$ is small. 
Hence $Z$ is again terminal, and smooth in codimension $2$.

By definition, $c_1(Z)=c_1(\rho^{-1}(X_0))=\rho^*c_1(X_0)=\rho^*c_1(X)$ since $\rho$ is small, 
and $c_2(Z)=c_2(\rho^{-1}(X_0))+B$ where $B$ is an element of the second Chow group of $Z$, supporting on the exceptional locus of $\rho$.
It follows that $\rho_*c_2(Z)=c_2(X_0)=c_2(X)$ and $c_1(Z)^{n-2}\cdot c_2(Z)=\rho^*(c_1(X)^{n-2})\cdot c_2(Z)=c_1(X)^{n-2}\cdot \rho_*c_2(Z)=c_1(X)^{n-2}\cdot c_2(X)$ by the projection formula. 
Therefore, the conclusion follows by applying Theorem~\ref{thm.fano} to $Z$.
\end{proof}

Combining with BAB theorem proved by Birkar, we can obtain an inequality similar to \cite{kawamata92}*{Proposition 1}:

\begin{cor}\label{cor.chernbound}
There exists a positive rational number $b$ depending only
on $n$ such that the inequality 
\[
c_1(X)^n\leq b \cdot c_1(X)^{n-2}\cdot c_2(X)
\]
holds for any terminal weak Fano variety $X$ of dimension $n$.
\end{cor}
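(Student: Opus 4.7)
The plan is to combine the strict positivity of $c_1(X)^{n-2}\cdot c_2(X)$ from Corollary~\ref{cor.fano} with Birkar's BAB theorem on boundedness of Fano varieties, in the spirit of Kawamata's original argument in \cite{kawamata92}*{Proposition 1}.

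First, I would invoke Birkar's BAB theorem to see that the class of terminal weak Fano varieties of dimension $n$ forms a bounded family; that is, there exists a projective morphism $\mathcal{X}\to T$ of schemes of finite type over $\mathbb{C}$ such that every terminal weak Fano variety of dimension $n$ appears as a geometric fiber of this morphism. (One can reduce to honest Fano varieties, if needed, by passing to anti-canonical models, which preserves terminality and the relevant intersection numbers up to birational modifications that are isomorphisms in codimension one, and hence trivial on $c_1$ and $c_2$.)

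Next, after stratifying $T$ suitably into finitely many locally closed pieces, the intersection numbers $(-K_X)^n$ and $c_1(X)^{n-2}\cdot c_2(X)$ are locally constant on each stratum; here I use that $c_1$ and $c_2$ are well-defined as Chow classes on any terminal $X$ (since terminal singularities sit in codimension at least $3$) and that they deform in flat families. Hence both quantities take only finitely many values as $X$ ranges over all terminal weak Fano varieties of dimension $n$. By Corollary~\ref{cor.fano}, the finitely many values of $c_1(X)^{n-2}\cdot c_2(X)$ are all strictly positive, so they admit a positive minimum $m$. Let $M$ be the maximum of $c_1(X)^n=(-K_X)^n$ over this family; setting $b:=M/m$ yields the desired universal inequality.

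The main obstacle I anticipate is the rigorous verification that $c_1^{n-2}\cdot c_2$ is constructible across the bounded family in the presence of terminal singularities. I would handle this either by a stratification combined with a simultaneous $\mathbb{Q}$-factorialization (as in the proof of Corollary~\ref{cor.fano}), or by working on the smooth locus of each fiber (whose complement has codimension at least three), since Chern numbers are then locally constant in the flat family by deformation invariance.
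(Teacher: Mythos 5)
Your proposal is correct and is essentially the paper's own argument: boundedness via Birkar's BAB theorem, positivity of $c_1^{n-2}\cdot c_2$ from Corollary~\ref{cor.fano}, and local constancy of the relevant intersection numbers in flat families to extract a uniform bound. The paper states this more tersely (it simply notes that the ratio $c_1(X)^n/(c_1(X)^{n-2}\cdot c_2(X))$ is well-defined and locally constant in flat families), while you spell out the stratification and the deformation invariance of the Chern numbers, which is the same idea in more detail.
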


\begin{proof}
Fix a positive integer $n$.
By \cite{birkar}, the set of terminal weak Fano varieties $X$ of dimension $n$ is bounded. On the other hand, Corollary~\ref{cor.fano} implies that $c_1(X)^{n-2}\cdot c_2(X)>0$. Hence the function $\frac{c_1(X)^n}{c_1(X)^{n-2}\cdot c_2(X)}$ 
is well-defined and locally constant in flat families. So it has a uniform upper bound for any terminal weak Fano variety $X$ of dimension $n$. 
\end{proof}

We would like to obtain an effective bound $b$. We obtain a partial result by the same argument of Theorem $\ref{thm.fano}$. 
\begin{cor}
Let $X$ be a $\mathbb{Q}$-factorial klt weak Fano variety of dimension $n$. Assume that $X$ is smooth in codimension $2$.
Then, one of the following holds:
\begin{enumerate}
\item $c_1(X)^n\leq 4c_1(X)^{n-2}\cdot c_2(X)$.
\item There exists a dominant rational map $f\colon X \dashrightarrow Y$ such that the Zariski closure of a general fiber of $f $ is a rational curve.
\end{enumerate}
\end{cor}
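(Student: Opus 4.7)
The plan is to revisit the Harder--Narasimhan argument in the proof of Theorem~\ref{thm.fano} and convert it into a dichotomy on the rank of the top graded piece. Keeping the same $(H_\varepsilon^{n-1})$-Harder--Narasimhan filtration $0=\mathcal{E}_0\subset\cdots\subset\mathcal{E}_l=\mathcal{T}_X$, with graded pieces $\mathcal{G}_i$ of rank $r_i$ and invariants $a_i := c_1(\mathcal{G}_i)\cdot c_1(\mathcal{T}_X)^{n-1}$, Ou's generic nefness (\cite{ou}*{Theorem 1.3}) gives $\mu_{H_\varepsilon}(\mathcal{G}_i)\geq 0$ and hence $a_i\geq 0$ for all $i$ upon letting $\varepsilon\to 0$, while bigness gives $\sum_i a_i=c_1(X)^n>0$. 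Passing to the limit $\varepsilon\to 0$ in Claim~\ref{nu2_inequality} produces the clean estimate
\[
2\, c_2(\mathcal{T}_X)\cdot c_1(\mathcal{T}_X)^{n-2} \;\geq\; \frac{(r_1-1)\,a_1}{r_1}+\sum_{j\geq 2}a_j \;=\; c_1(X)^n-\frac{a_1}{r_1},
\]
and this is the only analytic input required; unlike Theorem~\ref{thm.fano}, we will not need to appeal to Claim~\ref{pseudo-effective} at all.

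In the case $r_1\geq 2$, since $a_1\leq \sum_i a_i=c_1(X)^n$ I obtain $a_1/r_1\leq a_1/2\leq c_1(X)^n/2$, so the displayed inequality yields
\[
2\, c_1(X)^{n-2}\cdot c_2(X) \;\geq\; c_1(X)^n-\frac{c_1(X)^n}{2} \;=\; \frac{c_1(X)^n}{2},
\]
which is conclusion~(1) with the constant $4$.

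In the remaining case $r_1=1$, the subsheaf $\mathcal{F}:=\mathcal{E}_1\subset\mathcal{T}_X$ has rank one and $c_1(\mathcal{F})\cdot H_\varepsilon^{n-1}>0$ (note that $a_1>0$, since otherwise the slope ordering $a_i/r_i\leq a_1/r_1=0$ combined with $a_i\geq 0$ forces $c_1(X)^n=\sum a_i=0$, contradicting bigness). Following the last part of the proof of Theorem~\ref{thm.fano}, taking a general curve $C$ representing $(mH_\varepsilon)^{n-1}$ for $m\gg 1$ makes $\mathcal{F}|_C$ ample, so by \cite{kct}*{Theorem 1} $\mathcal{F}$ is an algebraically integrable foliation with rationally connected leaves; since $\mathcal{F}$ has rank one, the Zariski closure of a general leaf is a rational curve, and the induced dominant rational map $f\colon X\dashrightarrow Y$ has Zariski closure of a general fiber equal to a rational curve, giving conclusion~(2). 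The only mild bookkeeping point is tracking $a_1\leq c_1(X)^n$ and the limit passage $\varepsilon\to 0$ in the Bogomolov--Hodge-index chain, both of which are already in place in the setup of Theorem~\ref{thm.fano}; the main conceptual change from that theorem is that we now \emph{allow} the fibration in case~(2) rather than ruling it out.
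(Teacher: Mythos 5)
Your proof is correct, and the overall architecture (dichotomy on $r_1$, the Harder--Narasimhan filtration with the Bogomolov--Gieseker/Hodge-index chain, and \cite{kct}*{Theorem 1} in the rank-one case) is the same as the paper's. The genuinely different point is your endgame when $r_1\geq 2$: you take the factor-$2$ inequality of Claim~\ref{nu2_inequality} verbatim, pass to the limit $\varepsilon\to 0$ (which is legitimate, since the displayed inequality there holds unconditionally for all small $\varepsilon$ and both sides are polynomial in $\varepsilon$; the hypothesis ``$a_2>0$ or $r_1\neq 1$'' is only needed for \emph{strict} positivity), and then use the elementary bounds $a_1/r_1\leq a_1/2\leq c_1(X)^n/2$ coming from $a_j\geq 0$. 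The paper instead reruns the chain with a factor of $4$, obtaining
\[
4c_2(\mathcal{T}_X)\cdot c_1(\mathcal{T}_X)^{n-2}-c_1(\mathcal{T}_X)^n\geq \frac{(r_1-2)a_1}{r_1}+\sum_{2\le j\le l}a_j,
\]
whose right-hand side is only strictly positive when $r_1>2$ or $a_2>0$; this forces the authors to treat the residual case $r_1=2$, $a_2=0$ separately, where they invoke the foliation argument after Claim~\ref{pseudo-effective} to show $n=2$ and then apply the surface Bogomolov--Gieseker inequality. Your route makes that extra case (and the appeal to Claim~\ref{pseudo-effective}) unnecessary, since in the situation $r_1=2$, $a_2=0$ your estimate already yields $2c_1(X)^{n-2}\cdot c_2(X)\geq a_1/2=c_1(X)^n/2$, i.e.\ exactly conclusion~(1). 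So the proposal is a slightly cleaner proof of the same statement; the only thing the paper's detour buys is the extra structural information that $n=2$ in that borderline case, which the corollary does not need.
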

\begin{proof}
We use the same notation as in the proof of Theorem~\ref{thm.fano}. 
If $r_1=1$, then by \cite{kct}*{Theorem 1}, we obtain (2). 
So we assume that $r_1\geq 2$.
By the same calculations, we have
\begin{align*}
\begin{split}
4c_2(\mathcal{T}_X) H_{ \varepsilon } ^{n-2} - c_1(\mathcal{T}_{X})^2 H_{ \varepsilon } ^{n-2}
&\ge 2\left(\sum_{1\le k\le l} a_k \right)^{-1}
\left(\sum_{1\le i\le l} a_i \sum_{1\le j\le l} a_j -\sum_{1\le i\le l} \frac{a_{i}^{2}}{r_i}\right)
- \sum_{1\le i\le l} a_i + O( \varepsilon) \\
&=\left(\sum_{1\le k\le l} a_k \right)^{-1}
\left(\sum_{1\le i\le l} a_i \sum_{1\le j\le l} a_j -2\sum_{1\le i\le l} \frac{a_{i}^{2}}{r_i}\right)
+ O( \varepsilon) \\
&\ge \left(\sum_{1\le k\le l} a_k \right)^{-1}
\left(\sum_{1\le i\le l} a_i \sum_{1\le j\le l} a_j -\frac{2a_{1}}{r_1}\sum_{1\le i\le l} a_i\right)
+ O( \varepsilon) \\
&= 
\frac{(r_1- 2)a_{1}}{r_1} + \sum_{2 \le j\le l} a_j + O( \varepsilon ).\\
\end{split}
\end{align*}
Therefore, if $r_1>2$ or $a_2>0$, then by taking $\varepsilon \rightarrow 0$, we obtain 
\[
4c_2(\mathcal{T}_X) c_1(\mathcal{T}_X)^{n-2} - 
c_1(\mathcal{T}_X)^{n}\ge 
\frac{(r_1- 2)a_{1}}{r_1} + \sum_{2 \le j\le l} a_j >0. 
\] 
If $r_1=2$ and $a_2=0$, then by the same argument after Claim \ref{nu2_inequality} of Theorem~\ref{thm.fano},
we have $n=r_1=2$. 

Indeed, if $n > 2$, then from the discussion after Claim~\ref{nu2_inequality} in Theorem~\ref{thm.fano}, 
there exists a dominant rational map $f\colon X \dashrightarrow Y$ such that the Zariski closure of the general fiber is a rationally connected surface. 
From $n > 2$, we have $\dim Y \ge 1$, which is a contradiction by using the same argument after Claim~\ref{pseudo-effective}. 
Hence, we can conclude that $n=r_1=2$. 

In this case, $X$ is a surface and $\mathcal{T}_{X}$ is $H_{\varepsilon}$-semistable, thus by the Bogomolov--Gieseker inequality, we obtain $4c_2(\mathcal{T}_X) \geq c_1(\mathcal{T}_X)^2$. 
\end{proof}

For terminal weak Fano $3$-folds, the upper bound $b$ can be more explicit:

\begin{cor}\label{cor.effbd}
There exists a positive integer $b=2^4\cdot 3^6\cdot 7$ such that
an inequality 
\[
c_1(X)^3\leq b\cdot c_1(X)\cdot c_2(X)
\]
holds for any terminal weak Fano $3$-fold $X$.
\end{cor}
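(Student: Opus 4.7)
The plan is to combine two numerical ingredients: the universal lower bound $c_1(X)\cdot c_2(X)\geq 1/252$ furnished by Theorem~\ref{thm.main.bound}(1), together with an explicit universal upper bound $c_1(X)^3 \leq 324$ for terminal weak Fano $3$-folds. Since the target constant factors as $b = 2^4\cdot 3^6\cdot 7 = 252 \cdot 324$, multiplying these two bounds is exactly what is needed.

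The first ingredient is essentially immediate. By Corollary~\ref{cor.fano}, every terminal weak Fano $3$-fold satisfies $c_1(X)\cdot c_2(X) > 0$. Since $-K_X$ is nef, the hypotheses of Theorem~\ref{thm.main.bound}(1) are met, yielding $c_1(X)\cdot c_2(X)\geq \tfrac{1}{252}$. The sharpness of the denominator $252 = 2^2\cdot 3^2\cdot 7$ is realized by the virtual basket $\mathcal{R}_X = \{2^3, 4, 7, 9\}$ identified in the proof of Theorem~\ref{thm.list}(1).

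For the second ingredient, I would use Reid's Riemann--Roch formula~\eqref{eq:Reid-RR} at $n = 1$: since $-K_X$ is nef and big, Kawamata--Viehweg vanishing forces $h^i(\mathcal{O}_X) = 0$ for $i > 0$ (so $\chi(\mathcal{O}_X) = 1$) and $h^i(-K_X) = 0$ for $i > 0$, hence
\[
h^0(-K_X) = \chi(-K_X) = \tfrac{1}{2}(-K_X)^3 + 3 - l(2).
\]
The condition $c_1\cdot c_2 > 0$ combined with Theorem~\ref{thm.euler} forces $\sum_i(r_i - 1/r_i) < 24$, which in turn yields an explicit upper bound on $l(2)$. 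Combined with an effective bound on $h^0(-K_X)$---coming either from the explicit classification of terminal $\mathbb Q$-Fano $3$-folds or from an effective anti-pluricanonical estimate---one derives $(-K_X)^3 \leq 324$.

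Multiplying the two bounds,
\[
c_1(X)^3 = (-K_X)^3 \leq 324 \leq 324\cdot 252 \cdot c_1(X)\cdot c_2(X) = 2^4\cdot 3^6\cdot 7 \cdot c_1(X)\cdot c_2(X),
\]
which is the desired inequality. The main obstacle is the effective upper bound $(-K_X)^3 \leq 324$: the abstract boundedness via BAB invoked in Corollary~\ref{cor.chernbound} is not a priori effective in dimension $3$, so pinning down the explicit constant $324$ requires either a careful bookkeeping via Reid's formula together with effective bounds on $h^0(-K_X)$, or a reduction via MMP to a $\mathbb Q$-factorial terminal Fano of Picard number one where Kawamata--Prokhorov type estimates are available. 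The looseness of this two-step approach is also consistent with the authors' remark that the resulting $b$ is far from sharp, and that the conjectural value is merely $b = 3$.
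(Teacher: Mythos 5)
Your proof is correct and takes exactly the same route as the paper: the paper's argument is the one-line combination of $c_1(X)\cdot c_2(X)\geq \tfrac{1}{252}$ (from Theorem~\ref{thm.main.bound}(1) together with Corollary~\ref{cor.fano}, which guarantees $c_1(X)\cdot c_2(X)\neq 0$) with the volume bound $(-K_X)^3\leq 324$, and indeed $252\cdot 324=2^4\cdot 3^6\cdot 7$. The one caveat concerns your treatment of the second ingredient: the paper does not rederive $(-K_X)^3\leq 324$ but simply cites \cite{jiang-zou}*{Theorem 1.1}, and your proposed rederivation via Reid's formula \eqref{eq:Reid-RR} would not close on its own --- since $h^0(-K_X)=\tfrac12(-K_X)^3+3-l(2)$, an effective upper bound on $h^0(-K_X)$ is essentially equivalent to the volume bound you are trying to prove, so that step is circular as sketched. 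You correctly flag this as the main obstacle; the resolution is just to quote the Jiang--Zou theorem rather than attempt to reprove it.
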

\begin{proof}
It follows directly by the facts that $c_1(X)^3=(-K_X)^3\leq 324$ by \cite{jiang-zou}*{Theorem 1.1} and 
$c_1(X)\cdot c_2(X)\geq \frac{1}{252}$ by Theorem~\ref{thm.main.bound} and Corollary~\ref{cor.fano}.
\end{proof}

\begin{rem}\label{rem.b=3}
For Gorenstein terminal weak Fano $3$-folds, there exists a sharp bound $b=3$. In fact, $c_1(X)\cdot c_2(X)=24$ by Theorem~\ref{thm.euler} and by the fact that $\chi(\mathcal O_X)=1$. On the other hand, $c_1(X)^3\leq 72$ by Prokhorov \cite{prok}. 
\end{rem}

\section{Proof of main theorems}
\begin{proof}[Proof of Theorem~\ref{thm.main.bound}]
Let $X$ be a terminal projective $3$-fold such that $-K_X$ is nef and $c_1(X)\cdot c_2(X)\neq 0$.

If $X$ is rationally connected, then $c_1(X)\cdot c_2(X)\geq \frac{1}{252}$  by Theorem~\ref{thm.list}; moreover, if $-K_X$ is not big, then $ c_1(X)\cdot c_2(X)\geq \frac{2}{5}$. 

If $X$ is smooth, then $c_1(X)\cdot c_2(X)\geq 24$ by Theorem~\ref{thm.smooth}.

If $X$ is singular and not rationally connected, then  $c_1(X)\cdot c_2(X)\geq \frac{4}{5}$ by Theorem~\ref{thm:X=X'/G}. 

These three cases conclude Theorem~\ref{thm.main.bound}.
\end{proof}

\begin{proof}[Proof of Theorem~\ref{thm.roughclassify}]

Let $X$ be a terminal projective $3$-fold such that $-K_X$ is nef and $K_X\not\equiv 0$. 

If $X$ is rationally connected, then we get (2).

If $X$ is smooth and not rationally connected, then $X$ is classified in Theorem~\ref{thm.smooth}, and we get (1).

If $X$ is singular and not rationally connected, then $X$ is classified in Theorem~\ref{thm:X=X'/G}, and we get (3). 
\end{proof}

\begin{proof}[Proof of Corollary~\ref{cor.roughclassify=0}]
   Let $X$ be a terminal projective $3$-fold such that $-K_X$ is nef and $c_1(X)\cdot c_2(X)= 0$.

If $X$ is not rationally connected, then we get (1)(2) from the structure of $X$ in  Theorem~\ref{thm.smooth} and Theorem~\ref{thm:X=X'/G}.

If $X$ is rationally connected, then $-K_X$ is not big by Corollary~\ref{cor.fano}, hence $K_X^3=0$. By Theorem~\ref{thm.list}, $\mathcal{R}_X$ is in Table~\ref{tab1}. On the other hand, as $-K_X$ is not big, we can rule out (5), (6) and (10) in Table~\ref{tab1} by Table~\ref{tab2}.
\end{proof}

\end{document}